\documentclass[12pt]{article}
\usepackage{graphicx} %

\usepackage{amssymb}
\usepackage{amsmath,amsthm}
\usepackage[english]{babel}
\usepackage[T1]{fontenc}
\usepackage[a4paper,top=2.5cm,bottom=3cm,left=2.5cm,right=2.5cm,marginparwidth=1.75cm]{geometry}

\usepackage{microtype}

\usepackage{enumitem} %

\usepackage{mathtools}
\usepackage{graphicx,adjustbox}
\usepackage[dvipsnames]{xcolor}
\usepackage{bbm}
\usepackage[colorlinks=true, allcolors=black]{hyperref}
\usepackage[normalem]{ulem}
\usepackage{cancel}
\usepackage{varwidth}  %
\usepackage{booktabs}

\usepackage{xparse}
\NewDocumentCommand{\grad}{e{_^}}{%
  \mathop{}\!%
  \nabla
  \IfValueT{#1}{_{\!#1}}%
  \IfValueT{#2}{^{#2}}%
}

\DeclareMathOperator{\tr}{tr}

\DeclareMathOperator{\supp}{supp}

\usepackage{xspace}

\usepackage{dsfont}

\newtheorem{theorem}{Theorem}[section]
\newtheorem{corollary}[theorem]{Corollary}
\newtheorem{lemma}[theorem]{Lemma}
\newtheorem{proposition}[theorem]{Proposition}

\theoremstyle{definition}
\newtheorem{definition}[theorem]{Definition}
\usepackage{textcomp}
\newtheorem{remarkx}[theorem]{Remark}
\newtheorem{examplex}[theorem]{Example}
\newenvironment{remark}
{\pushQED{\qed}\remarkx} %
{\popQED\endremarkx}
{\popQED\endexamplex}

\DeclareMathAlphabet{\mathup}{OT1}{\familydefault}{m}{n}
\newcommand{\dx}[1]{\mathop{}\!\mathup{d} #1}
\newcommand{\dd}{\dx} %
\usepackage{ifthen}
\newlength{\leftstackrelawd}
\newlength{\leftstackrelbwd}
\def\leftstackrel#1#2{\settowidth{\leftstackrelawd}%
{${{}^{#1}}$}\settowidth{\leftstackrelbwd}{$#2$}%
\addtolength{\leftstackrelawd}{-\leftstackrelbwd}%
\leavevmode\ifthenelse{\lengthtest{\leftstackrelawd>0pt}}%
{\kern-.5\leftstackrelawd}{}\mathrel{\mathop{#2}\limits^{#1}}}

 \def\calB{{\mathcal B}} 
  \def\calF{{\mathcal F}}

 \def\calN{{\mathcal N}} 
\def\calP{{\mathcal P}}  
  
  \def\calX{{\mathcal X}}

\def\rmd{{\mathrm d}}

\def\rmD{{\mathrm D}}

 \def\bbE{{\mathbb E}} \def\bbF{{\mathbb F}}

\def\bbP{{\mathbb P}}  \def\bbR{{\mathbb R}}
\def\bbS{{\mathbb S}}  
  
 \def\bbZ{{\mathbb Z}}

\DeclareMathOperator\Id{Id}

\DeclareMathOperator{\Sym}{Sym}
\DeclareMathOperator{\law}{law}
\DeclareMathOperator{\dist}{dist}
\DeclareMathOperator{\vol}{vol}
\DeclareMathOperator{\Crit}{Crit}

\newcommand{\faa}          {\quad \text{for almost all } \,}
\title{Random Quadratic Form on a Sphere: \\ Synchronization by Common Noise}
\author{Maximilian Engel$^{1, 2}$, Anna Shalova$^{1}$\thanks{\href{mailto:a.shalova@uva.nl}{a.shalova@uva.nl}}
\\
\normalsize $^{1}$ Korteweg-de Vries Institute for Mathematics, University of Amsterdam,
\\
\normalsize $^{2}$ Department of Mathematics and Computer Science, FU Berlin
}
\date{\today}

\begin{document}

\maketitle

\begin{abstract}

    We introduce the Random Quadratic Form (RQF): a stochastic differential equation which formally corresponds to the gradient flow of a random quadratic functional on a sphere. While the one-point dynamics of the system is a Brownian motion and thus has no preferred direction, the two-point motion exhibits nontrivial synchronizing behaviour. In this work we study synchronization of the RQF, namely we give both distributional and path-wise characterizations of the solutions by studying invariant measures and random attractors of the system. 
    
    The RQF model is motivated by the study of the role of linear layers in transformers and illustrates the \emph{synchronization by common noise} phenomena arising in the simplified models of transformers. In particular, we provide an alternative (independent of self-attention) explanation of the clustering behaviour in deep transformers and show that tokens cluster even in the absence of the self-attention mechanism.
\end{abstract}
\tableofcontents
\section{Introduction}
In this work we study the Random Quadratic Form (RQF), the stochastic differential equation (SDE) on a sphere $\bbS^{n-1}$ with  multiplicative noise defined by the process $Q_t$, namely
    \begin{equation}
    \label{eq:rqf}
    \rmd X_t  %
    = -P_{X_t}  \partial Q_t  X_t,
    \end{equation}
    where for every $X \in \bbS^{n-1}$
    \[
    P_{X} := P_{T_{X}\bbS^{n-1}} = I - XX^T
    \]
    is the projection onto the tangent space of $\bbS^{n-1}$ at $X$. The noisy process 
    $$Q_t: (0, T) \times \Omega \to \Sym^n$$
    is a stochastic process on the space of symmetric real $n\times n$ matrices, given as
\begin{equation}
\label{eq:A}
Q_t =  \frac{1}{\sqrt{2}}(B_t + B_t^T), 
\end{equation}
where $\{B_t^{ij}: i, j \in 1\dots n\}$ are independent Brownian motions.  Finally, the notation $\partial Q_t$ implies that the SDE \eqref{eq:rqf} is understood in the Stratonovich sense. We use the $\partial Q_t$ notation for the Stratonovich integrals due to the matrix form of the noise process~$Q_t$. Our analysis requires both left and right matrix-vector multiplications and thus we write the integrator $\partial Q_t$ in the middle; this argument becomes more transparent in Section \ref{sec:intro-gf}. 

 The RQF \eqref{eq:rqf} can be interpreted as a gradient flow of a \emph{random quadratic functional} on a sphere; and the gradient structure of the dynamics provides important insights on the long-time behaviour of the system. %
 In particular, the standard (deterministic) gradient flow structure implies that the driving functional is the \emph{optimal} 
Lyapunov function of the underlying dynamics (see Remark~\ref{remark:Lyap}), guaranteeing, under certain convexity assumptions, the convergence of the solutions to the minimizer of the driving functional. Remarkably, some of the properties of the deterministic gradient dynamics translate into the random case, which we illustrate through the example of RQF (and also the spherical Brownian motion, see Section~\ref{sec:bias}). In particular, despite the minimizers of the RQF being \emph{random} sets, the underlying dynamics behaves similarly to the gradient flow of a deterministic quadratic functional clustering the points into an anti-polar configuration. 

The summary of the main results, describing the long-time behaviour of the RQF, is given in Section \ref{sec:intro-results}. Before we proceed to the results, we give a heuristic argument revealing the gradient structure of Eq.~\eqref{eq:rqf} in Section \ref{sec:intro-gf} to strengthen the relation between deterministic and random quadratic forms. We also introduce the continuous-time counterpart of the transformer model as a key motivation for this study in Section \ref{sec:intro-transformers} and give a literature overview in Section \ref{sec:lit}.

\subsection{Gradient flow formulation}
\label{sec:intro-gf}
In this section we give a \emph{formal} calculation, illustrating the gradient structure of the RQF. In particular, one of the steps involves differentiating a differential, which we do not intend to make rigorous. We provide it here for the illustrative purposes only.

Let $M \in \Sym^n$ be a symmetric real matrix and let $F_M: \bbS^{n-1} \to \bbR$ be the corresponding quadratic functional, namely
\[
F_M (x) := \frac12x^TMx.
\]
We call the gradient flow of $F_M$ on $\bbS^{n-1}$, given by
\begin{equation}
\label{eq:intro-dqf}
    \dot x := - \nabla F_M(x) = -P_x Mx, \qquad x(0) = x_0 \in \bbS^{n-1},
\end{equation}
where $\nabla$ denotes the Riemannian gradient on $\bbS^{n-1}$ with the natural topology of the flow, the \emph{deterministic quadratic form}. Rewriting \eqref{eq:intro-dqf} in integral form we thus obtain
\[
x(t) =  x_0 - \int_0^t \nabla F_M(x(s)) \rmd s =x_0 - \int_0^t P_{x}Mx(s) \rmd s. 
\]
Moreover, introducing the cumulative functional $\bbF_M$, averaging $F_M$ along the solutions of the gradient flow, we can rewrite \eqref{eq:intro-dqf} as the following coupled system of equations
\begin{align}
x(t) &=  x_0  - \int_{0}^t \nabla \partial_s \bbF_M(s, x_0), \quad \text{where} \label{eq:intro-curve} \\
\bbF_M(t, x_0) &:= \int_0^t  F_M(x(s)) \rmd s = \frac12\int_0^t x(s)^TMx(s) \rmd s,
\end{align}
and $\partial_s \bbF_M(s, x_0)$ is a formal differential of $\bbF_M$, namely
\[
\partial_s \bbF_M(s, x_0) := \frac{\rmd}{\rmd t} \bbF_M(t, x_0) \big|_{t=s} \cdot \rmd s= F_M(x(s))\rmd s.
\]
In other words, the object $\partial_s \bbF_M(s)$ is understood in the sense that the curve~\eqref{eq:intro-curve} solves~\eqref{eq:intro-dqf}.

Analogously, let $M : (0, T) \to \Sym^n$ be a curve on $\Sym^n$ and consider the following quadratic \emph{driving functional} defined by $M$:
\[
F_M (x, t) := \frac12x^TM(t)x.
\]
Then, the gradient flow dynamics of the the time-dependent functional $F_M$ given by
\begin{equation}
\label{eq:intro-gf}
\dot x = -\nabla F_M(x, t) = -P_{x}M(t)x, \qquad x(0) = x_0 \in \bbS^{n-1}, 
\end{equation}
is equivalent to the system
\begin{align*}
x(t) &=  x_0  - \int_{0}^t \nabla \partial_s \bbF_M(s, x_0), \quad \text{where} \\
\bbF_M(t, x_0) &:= \int_0^t  F_M(x(s), s) \rmd s = \frac12\int_0^t x(s)^TM(s)x(s) \rmd s, \\
\partial_s \bbF_M(s, x_0) &:= \frac{\rmd}{\rmd t} \bbF_M(t, x_0) \big|_{t=s} \cdot \rmd s= F_M(x(s), s)\rmd s.
\end{align*}
Moreover, let us define $Q_s$ to be the average of the matrix $M(s)$ over time, namely let $Q_s$ solve
\begin{equation}
\label{eq:intro-Adet}
\rmd Q_s = M(s) \rmd s.
\end{equation}
Then, the differential $\partial_s \bbF_M$ above is equal to $\partial_s \bbF_M(s, x_0) = \frac12 x(s)^T (\rmd Q_s) x(s)$ and we can formally rewrite \eqref{eq:intro-gf} once again as
\begin{align*}
x(t) &=  x_0  - \int_{0}^t \nabla  F_{\rmd Q_s}(x(s)), \quad \text{where} \\
F_{\rmd Q_s}(x(s)) &:= \frac12 x(s)^T (\rmd Q_s) x(s) = \frac12 x(s)^T M(s) x(s) \rmd s. 
\end{align*}
Formally, replacing the deterministic curve \eqref{eq:intro-Adet} with the random process $Q_t$ as defined in~\eqref{eq:A}, we  obtain the \emph{gradient flow formulation} of the RQF \eqref{eq:rqf}:
\[
X_t =  X_0  - \int_{0}^t \nabla F_{\partial Q_s}(X_s) \quad \text{where} \quad F_{\partial Q_s}(X_s):= \frac12 X_s^T\partial Q_s X_s,
\]
where the integral is understood in the Stratonovich sense. Intuitively this characterization implies that the increment of the solution of the RQF at time $s$ tends to minimize the quadratic functional $\frac12 X_s^T\partial Q_s X_s$ defined by the increments of the noise process~$Q_t$. 
\begin{remark}[Back from the random quadratic form to the deterministic one] Note that the RQF \eqref{eq:rqf} can be defined for a more general choice of the noise process $Q_t$. In particular, choosing $Q_t = tM$ for some $M \in \Sym^n$ gives the standard gradient flow of the fixed quadratic functional $F_M$:
\[
\dot x \cdot \rmd s = -P_x M x \cdot \rmd s= - \nabla \left(\frac{1}{2} x^T M x\right) \cdot \rmd s.
\]
In this work we show the similarities between the two opposite cases: deterministic and white-noise driven processes $Q_t$. We expect that the behaviour of the RQF stays similar for a larger class of driving processes, but leave it as a topic for future research.
\end{remark}
\begin{remark}[Interpretation of the driving functional]
\label{remark:Lyap}
    The driving functional of a gradient flow is a Lyapunov function of the corresponding system; moreover, it is an optimal Lyapunov function, where the optimality is understood in the following sense. Let $x_t$ be a solution of an abstract gradient flow equation $\dot x = - \nabla F(x)$, then by the chain rule we obtain
$$\frac{\rmd}{\rmd t}F(x_t) = \left< \nabla F(x_t), \dot x_t\right> = - \|\nabla F(x_t)\|\|\dot x_t\|.$$ Analogously, for any other Lyapunov function $L$ of the dynamics we calculate
$$\frac{\rmd}{\rmd t}L(x_t) = \left< \nabla L(x_t), \dot x_t\right> \geq - \|\nabla L(x_t)\|\|\dot x_t\|,$$
where the equality is achieved if and only if $\nabla L = \xi \nabla F$ almost everywhere along the trajectory $x_t$ for some non-negative $\xi = \xi(x)$.
\end{remark}
\subsection{Motivation: role of linear layers in Transformers}
\label{sec:intro-transformers}
Transformers form a class of neural network models originally introduced in \cite{vaswani2017attention} for natural language modeling problems. To build a computer-friendly representation of a text, a common approach is to construct a vocabulary consisting of all possible words (or other small lexical elements called \emph{tokens}) and assign a (unit real) vector value to every element of the vocabulary. Having built such a vocabulary, every text can then be split into a sequence of words and be represented as a sequence of vectors corresponding to the tokens in the text. In particular, a sentence of length $d$ has a representation  $(x_i)_{1\leq i\leq d}$, $x_i \in \bbR^{n}$, where $n$ is the dimension of the model. 

 The transformer architecture takes the above mentioned token representation as an input and iteratively updates the state of every token depending on the whole set of states at the previous iteration
 \[
 x^{k+1}_i = T_k ((x^k_i)_{1\leq i\leq d}), 
 \]
 where $T^k$ is the $k$-th transformer layer. Every update $T^k$ consists of the so-called feed-forward layer, a self-attention layer and the normalization (projection to the unit sphere). We refer the reader to \cite{geshkovski2024mathematical, baptista2026large} for an extensive description of the transformer architecture. Below we give a continuous-time counterpart of the dynamics following \cite{sander2022sinkformers, geshkovski2024mathematical} and relate the RQF model to the linear part of the feed-forward layer. 

Let $x_i(t) \in \bbS^{n-1}$ be the representation of the $i$-th token at time $t$. Then the continuous-time transformer dynamics is given by
\[
\begin{aligned}
\dot x_i &= P_{x_i}\left(\text{FF}(x_i) + \text{Attn}(x_i; x_1, x_2 \dots x_d)\right), \\
\text{FF}(x_i) &= \sigma(Mx_i + B), \\
\text{Attn}(x_i; x_1, x_2 \dots x_d) &= \frac{1}{\sum_j e^{x_iQ^TKx_j}}\sum_j e^{x_iQ^TKx_j}Vx_j,
\end{aligned}
\]
where $\sigma$ is an activation function, $M, B$ are the parameters of the Feed-Forward layer (denoted as $\text{FF}$) and $Q, K, V$ are the parameters of the Self-Attention layer ($\text{Attn}$), which may depend on time $t$ but not on the indices $i, j$. 
Such a structure can be interpreted as the dynamics of an interacting particle system, where the Feed-Forward layer plays the role of potential energy and the Self-Attention layer describes the interaction forces between tokens (particles), as mentioned in \cite{alvarez2026perceptrons, zimin2026yuriiformer}. 

The role of the Self-Attention layers in this (continuous-time) framework has been recently extensively studied, see \cite{rigollet2025mean} and references therein. In particular, the self-attention mechanism has been shown to exhibit clustering behaviour in various settings, see Section \ref{sec:lit}. At the same time, the Feed-Forward layers have often been discarded to simplify the analysis, with an exception of the work \cite{alvarez2026perceptrons}, where the transformer architecture consisting of a fixed Self-Attention layer and a \emph{fixed} Feed-Forward layer is considered.

In this work, we focus solely on the Feed-Forward layer and consider the following simplified model of transformers in the absence of the interaction force (self-attention):
\begin{equation}
\label{eq:neural-odes}
\dot x_i = P_{x_i}\text{FF}(x_i), \qquad \text{FF}(x_i) = \sigma(M(t)x_i + B).
\end{equation}
Under the additional structural assumptions $B \equiv 0$ and $\sigma(x) = x$, we conclude that every token follows the dynamics \eqref{eq:intro-gf}. Finally, to justify the white-noise structure of the driving process $Q_s$ we remark that the parameters in transformers are initialized randomly and are independent from layer to layer. Considering in addition the process $Q_t$ to be self-adjoint, we obtain the RQF model
\[
\rmd X_t = P_{X_t}  \partial Q_t  X_t,
\]
as a simplified model of the dynamics driven by Feed-Forward layers in transformers. Note that the processes $Q_t$ and $-Q_t$ have the same distribution and, thus, the minus sign in the definition~\eqref{eq:rqf} is largely a matter of taste and does not affect the properties of the system.

We remark that model \eqref{eq:neural-odes} falls into a more general class of neural networks, namely Neural ODEs, introduced in \cite{chen2018neural}. However, the standard problem setting of Neural ODEs is concerned with the one-point motion in a finite-time horizon and, hence, we argue that the setting of this work is closer related to transformer models: Firstly, we consider the joint dynamics of multiple particles (tokens). Secondly, we consider the long-time behavior, which is common in the transformers setting.
\subsection{Related work}
\label{sec:lit}
\paragraph{Synchronization by noise}
We begin by giving a non-exhaustive description of the results concerning synchronization in various stochastic systems appearing due to the common noise. We refer the reader to \cite{FGS1} for more references on the topic. 

Synchronization by noise in linear models was proved by means of the multiplicative ergodic theorem in \cite{arnold1983stabilization}; the result was later extended to the infinite-dimensional case in \cite{caraballo2004stabilisation}. The setting of our work was studied in \cite{baxendale1986asymptotic, baxendale1991statistical}, where the synchronization phenomenon was, similarly to the linear model, related to the Lyapunov exponents of the underlying system. The same approach has been applied to the  isotropic Brownian flows on the sphere in \cite{raimond1999flots}. An alternative approach to characterizing synchronization, based on the Feller explosion test, was introduced in \cite{scheutzow2002comparison} and further developed in \cite{cranston2016weak}. 

Additional probabilistic tests for stochastic differential equations have been introduced in \cite{FGS1} and, for order-preserving systems, in \cite{FGS2}, extending results from \cite{CrauelFlandoli98} for SDEs and \cite{Caraballoetal07} for SPDEs. The distinction between uniform and non-uniform synchronization and its relation to random bifurcations has been observed in \cite{Callawayetal17} and then been extended to multidimensional SDEs \cite{Doanetal18} and SPDEs \cite{BlumenthalEngelNeamtu2023}, with recent quantification via large deviation theory in \cite{BlessingBlumenthalBredenEngel2025}.
We remark that all of the mentioned works besides \cite{baxendale1991statistical} are focused on the case of diffusion processes synchronizing to a single point, which does not hold in the case of the RQF. Another example of \emph{partial synchronization} by common noise has been established in the context of jump processes and countable state spaces \cite{Engeletal25}, with a general classification being provided in \cite{ChemnitzEngelOlicon25}.

An approach based on large-deviations theory, allowing to prove local synchronization, was developed in \cite{mahony1996gradient} for diffusions and \cite{tearne2008collapse} for random ODEs.
For additional studies on \emph{noise-induced order} see, e.g., \cite{BlumenthalNisoli} and \cite{Endoetal25}.

\paragraph{Non-autonomous gradient systems} Relative convergence of trajectories is the distinctive feature of gradient flows of convex functionals. While it is commonly assumed that the driving functional does not explicitly depend on time, the gradient flow framework has also been extended to the non-autonomous setting, see e.g. \cite{mielke2005evolution, mainik2005existence, mielke2015rate}. At the same time, the analysis of gradient flows driven by \emph{random} functionals is largely unexplored. Below, we mention some random models with a gradient structure related to the setting of this work.

Gradient flows in the space of probability measures driven by a random process (RQF is such a model) fall into the framework of stochastic evolution equations; well-posedness of these models has been established in \cite{coghi2019stochastic, gess2019stochastic}. In addition, RQF could be interpreted as the fast-jump limit of the corresponding piecewise-deterministic process. Thus, another related class of non-autonomous diffusion processes are piecewise-deterministic Markov processes introduced in \cite{davis1984piecewise}. In particular, depending on the jump rates and irreducibility of the processes the system may admit (unique or not) stationary measures, see e.g. \cite{bakhtin2012invariant, benaim2015qualitative}.

\paragraph{Synchronization in transformer models}
The joint dynamics of tokens in continuous-time models driven by the pure self-attention mechanism has been studied in  \cite{geshkovski2024emergence}, considering various settings depending on the parameters $Q, K$ and $V$. The structure of the critical points of the corresponding energy functional in the mean-field setting was characterized in \cite{burger2025analysis}. Convergence of tokens to a single cluster have later been proved in the simplified setting $Q= V = I$, $K = \beta I$  in \cite{geshkovski2024mathematical, criscitiello2024synchronization, polyanskiy2025synchronization}. The rates of convergence in the same one-parametric case have been studied in \cite{chen2025quantitative}. In the analogous setting, in \cite{alvarez2026perceptrons}, the stability of clustered states has been established in the presence of Feed-Forward layers with fixed parameters. Clustering in the special case of hardmax self-attention is proved in \cite{alcalde2025clustering, alcalde2025attention}.

Even in cases when any stable equilibrium state consists  necessarily of a singleton, the self-attention models have been shown to admit \emph{metastable} states consisting of multiple clusters, see \cite{geshkovski2024dynamic, bruno2024emergence, bruno2025multiscale}. For estimates on the number of metastable clusters,  see \cite{bruno2024emergence, geshkovski2024number}.

Several results also concern the behavior of tokens in transformers in the presence of randomness. In particular, if the tokens are driven by the deterministic self-attention drifts and independent Brownian diffusions, the resulting mean-field model is given by the McKean-Vlasov equation. The long-time behaviour of such a system is studied in \cite{shalova2026solutions, balasubramanian2025structure}. A more realistic model of noise has been recently considered in \cite{fedorov2026clustering}, where the noise is considered to be common for all tokens and comes from the randomization of the self-attention parameters. In particular, it was shown that the clustering phenomenon appears in the case when the parameter $V = V(t)$ is a matrix of independent Wiener processes.

\subsection{Main results}
\label{sec:intro-results}

In this work, we study distributional properties of the solutions to Eq.~\eqref{eq:rqf}, i.e., \emph{invariant measures} (see Section \ref{sec:main-distributional}), and give a path-wise characterization, describing the \emph{random attractor} of the RQF (see  Section \ref{sec:main-pw}). 

We argue that the behavior of the RQF can be understood as a random counterpart of the dynamics of the deterministic quadratic form. In order to compare the two systems, we first give an informal characterization of the deterministic quadratic form following \cite{mahony1996gradient}, which is made rigorous in Section \ref{sec:dqf}. 
\begin{theorem}[Deterministic Quadratic Form]
\label{th:intro-dqf}
    Let $M \in \Sym^n$ be a symmetric matrix, sampled from the Gaussian Orthogonal Ensemble (cf.~\eqref{eq:GOE}). 
    Then, with probability $1$, there exists $x^*\in \bbS^{n-1}$ such that the gradient flow of the quadratic form \eqref{eq:intro-dqf} satisfies 
    \[
    \lim_{t\to \infty}\min \left( \dist(x(t), x^*),  \dist(x(t), -x^*)\right) =0
    \]
    for a.e. initial condition $x_0 \in \bbS^{n-1}$.
    In other words, almost every trajectory of the gradient flow $x(t)$ converges to either $x^*$ or $-x^*$.
\end{theorem}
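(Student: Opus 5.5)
The plan is to solve the linear flow explicitly and then reduce the claim to a property of the spectrum of a GOE matrix. First, the gradient flow $\dot x = -P_x Mx$ on $\bbS^{n-1}$ is the projectivization of the linear ODE $\dot y = -My$ in $\bbR^n$. Indeed, if $y(t) = e^{-tM}x_0$, then $x(t) := y(t)/|y(t)|$ satisfies
\[
\dot x = \frac{\dot y}{|y|} - \frac{y\, (y^T\dot y)}{|y|^3} = -Mx + (x^T M x)\, x = -P_x M x ,
\]
with $x(0)=x_0$. By uniqueness of solutions of the smooth ODE on the compact manifold $\bbS^{n-1}$, this gives the explicit formula
\[
x(t) = \frac{e^{-tM}x_0}{|e^{-tM}x_0|}.
\]

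Second, I diagonalize $M = \sum_k \lambda_k v_k v_k^T$ with orthonormal eigenvectors $v_k$. The key random-matrix input is that under the GOE the joint eigenvalue density contains the Vandermonde factor $\prod_{i<j}|\lambda_i-\lambda_j|$. Hence, with probability $1$, the eigenvalues are simple, and in particular the smallest one $\lambda_1 < \lambda_2 \le \dots \le \lambda_n$ is simple. An equivalent argument is that the set of matrices with a repeated eigenvalue is a proper algebraic subvariety of $\Sym^n$, so it has Lebesgue measure zero, while the GOE law is absolutely continuous. On this event I set $x^* := v_1$, which is determined up to sign; the sign ambiguity is harmless because the statement is symmetric under $x^* \mapsto -x^*$.

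Third, for the convergence, write $x_0 = \sum_k c_k v_k$. Then
\[
e^{-tM}x_0 = e^{-t\lambda_1}\Bigl(c_1 v_1 + \sum_{k\ge2} c_k e^{-t(\lambda_k-\lambda_1)} v_k\Bigr).
\]
If $c_1 = \langle x_0, v_1\rangle \neq 0$, the sum in the bracket tends to $0$ exponentially fast at rate $\lambda_2 - \lambda_1 > 0$. Normalizing, $x(t) \to \operatorname{sign}(c_1)\, v_1$. The exceptional set $\{x_0 : \langle x_0, v_1\rangle = 0\}$ is a great subsphere, which has zero surface measure. So for almost every $x_0$ we get $\min(\dist(x(t),x^*), \dist(x(t),-x^*)) \to 0$.

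The only step with real content is the almost-sure simplicity of $\lambda_1$. Everything else is linear algebra, which also explains why the "a.e. initial condition" qualifier is needed. I would either cite the standard GOE joint density or give the measure-zero discriminant argument: the discriminant of the characteristic polynomial is a nonzero polynomial in the entries of $M$, so its zero set is Lebesgue-null.
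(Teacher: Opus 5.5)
Your proof is correct and follows the paper's route: the explicit normalized-exponential solution, convergence to the extremal eigenspace for every initial condition off a null great subsphere (which the paper takes from Mahony et al., Theorem 2.1), and almost-sure simplicity of that eigenvalue under the GOE. Your sign bookkeeping ($e^{-tM}$, convergence to the eigenvector of the smallest eigenvalue) is the correct one for $\dot x = -P_xMx$; the paper's Section 2 instead writes $e^{tM}$ and the maximal eigenspace, a slip that does not affect this statement because it only asserts the existence of some $x^*$.
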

\begin{figure}
    \centering
    \includegraphics[width=0.32\linewidth]{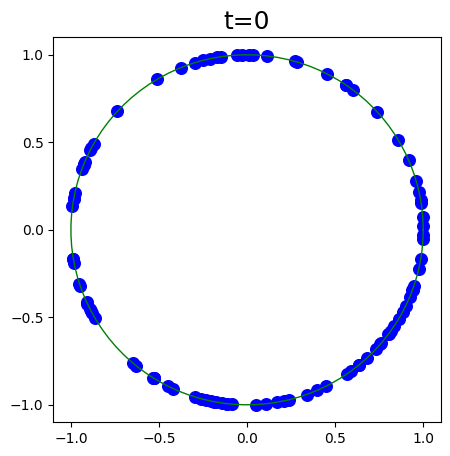}
    \includegraphics[width=0.32\linewidth]{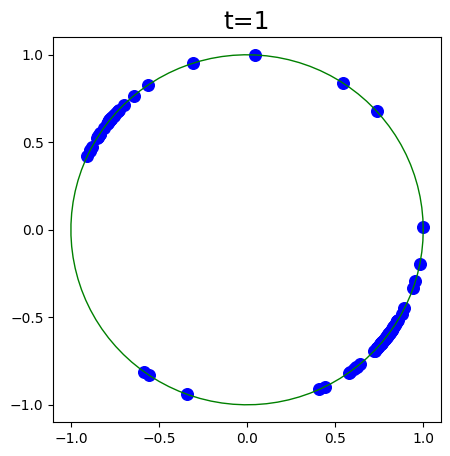}
    \includegraphics[width=0.32\linewidth]{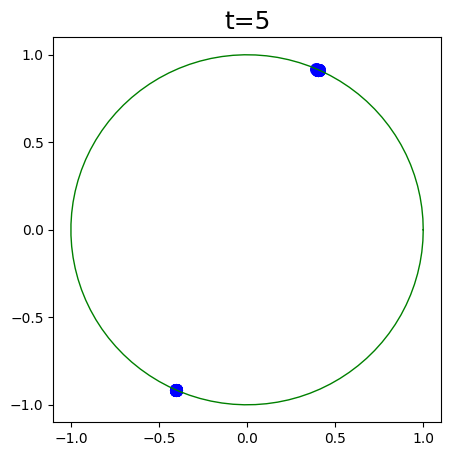}
    \caption{Ensemble of RQFs driven by the same process $Q_t$ from different initial conditions. At time $t \sim 5$ the trajectories approach the \emph{random attractor} consisting of two antipodal points that further move in time. }
    \label{fig:rqf}
\end{figure}
Since the points $x^*$ and $-x^*$ are opposite poles on the sphere, we say that any measure supported on two opposite points is an \emph{anti-polar} configuration. The main result of this work is informally summarized in the following theorem; for rigorous statements see Theorems \ref{th:rqf-one} and  \ref{th:rqf-attractor}.
\begin{theorem}[Random Quadratic Form]
\label{th:intro-main}
    Let $Q_t$ be the driving process as described in~\eqref{eq:A}. Let $X_t$ and $Y_t$ be the RQF processes driven by $Q_t$ with (possibly) different initial conditions $X_0, Y_0 \in \bbS^{n-1}$, then
    \begin{itemize}
        \item the law of $X_t$ (and $Y_t$) in the large time limit converges to the uniform measure on the sphere,
        \item for almost every $\omega$, the two RQF processes $X_t, Y_t$ satisfy
        \[
        \lim_{t\to \infty} \min \left(\dist(X_t, Y_t), \dist(X_t, -Y_t)\right) =0.
        \]
        In other words, $X_t$ and $Y_t$ either converge to each other (polar) or become opposite (anti-polar configuration).
    \end{itemize}
\end{theorem}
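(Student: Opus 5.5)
We prove the two assertions of Theorem~\ref{th:intro-main} separately. The first concerns only the one-point motion, and the second concerns the two-point motion modulo the antipodal symmetry $y \mapsto -y$.

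\textbf{One-point motion.} We first convert \eqref{eq:rqf} into Itô form and compute its generator. Write $\partial Q_t = \sum_{i\le j} E_{ij}\,\partial W^{ij}_t$ with suitably normalized symmetric basis matrices $E_{ij}$ and independent Brownian motions $W^{ij}$. The vector fields $V_{ij}(x) = -P_x E_{ij} x$ are gradients of $x\mapsto -\tfrac12 x^TE_{ij}x$. The Stratonovich generator is $\tfrac12\sum V_{ij}^2$, and we compute it directly.

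The law of $Q_t$ is invariant under conjugation $Q\mapsto OQO^T$, $O\in O(n)$ (GOE invariance). Hence the generator commutes with rotations, and a rotation-invariant second-order operator on $\bbS^{n-1}$ with no zeroth-order term is $c\,\Delta_{\bbS^{n-1}}$ plus possibly an invariant drift. On $\bbS^{n-1}$, for $n\ge 3$, the only $O(n)$-equivariant tangent vector field is $0$; for $n=2$ reflection invariance also kills it. The generator is therefore $c\Delta$ with $c>0$, and we identify $c$ from $\sum_{ij}|P_xE_{ij}x|^2 = $ const. Thus $X_t$ is a time-changed spherical Brownian motion, and its law converges to the uniform measure by the spectral gap of $\Delta$.

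\textbf{Two-point motion.} We use the projective quantity $\rho_t = \langle X_t,Y_t\rangle^2\in[0,1]$. The claim is equivalent to $\rho_t\to 1$ almost surely, since $\dist(X,\pm Y)\to0$ iff $|\langle X,Y\rangle|\to 1$.

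By invariance, the pair $(X_t,Y_t)$ modulo rotations is described by $u_t = \langle X_t,Y_t\rangle$ alone, and $u_t$ is a one-dimensional diffusion. We compute $\rmd u$ via Stratonovich calculus. The drift comes from
\[
\rmd u = -\,Y^T P_X\,\partial Q\, X - X^TP_Y\,\partial Q\, Y ,
\]
which simplifies to
\[
\rmd u = -2\,X^T\partial Q\, Y + u\,\bigl(X^T\partial Q X + Y^T\partial Q Y\bigr).
\]
We then compute the quadratic variation $\sigma^2(u)$ and the Itô drift $b(u)$ using $\bbE[(a^TQb)(c^TQd)] \propto (a\cdot c)(b\cdot d) + (a\cdot d)(b\cdot c)$. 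We expect $b$ and $\sigma^2$ to vanish at $u=\pm1$ and to be of the form $\sigma^2(u) = k(1-u^2)^2$ up to a constant. The diagonal is invariant, so the diffusion lives on $(-1,1)$, and $u=0$ is an interior point.

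The main step is a boundary classification via Feller's test, in the spirit of \cite{scheutzow2002comparison}. We compute the scale function $s(u)=\int \exp(-\int 2b/\sigma^2)$ and show that $s(\pm1)$ is finite. Both endpoints are then attracting and unreachable in finite time, and $u_t$ converges almost surely to $\{-1,1\}$. This holds for every deterministic initial pair with $|u_0|<1$; the case $|u_0|=1$ is trivial.

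Equivalently, we can view this as the sign of the top Lyapunov exponent of the linearization near the diagonal: $\lambda = \lim \tfrac1t\log|X_t-Y_t| <0$, computed from the leading order of $b,\sigma$ at $u=1$. The local behaviour $1-u\approx \varepsilon$ should give $\rmd \log\varepsilon = (\alpha-\tfrac12\beta)\,\rmd t + \text{noise}$, and the obstacle is verifying $\alpha - \tfrac12\beta <0$ and then excluding oscillation in the interior. Oscillation is ruled out by the scale-function integrability, because a recurrent diffusion on $(-1,1)$ would require $s(\pm1)=\pm\infty$.

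The hardest part is the explicit Itô correction for $u$. The Stratonovich-to-Itô term involves second derivatives of the projected vector fields, and its sign decides synchronization versus its failure; a sign error there would flip the conclusion. I would double-check it against the known case $n=2$, where $X_t=(\cos\theta_t,\sin\theta_t)$ and everything reduces to a scalar SDE for the angle difference $\theta^X-\theta^Y$.
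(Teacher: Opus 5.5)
Your second bullet follows the paper's route. The paper also passes to $Z_t=\langle X_t,Y_t\rangle$, identifies it in law with the autonomous diffusion $\rmd\hat Z_t=-2\hat Z_t(1-\hat Z_t^2)\,\rmd t+2(1-\hat Z_t^2)\,\rmd B_t$, and applies Feller's scale-function criterion \cite[Proposition 5.22]{karatzas2012brownian}.

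For the first bullet you take a genuinely different route, and it is sound. The paper computes the Fokker--Planck operator explicitly in Theorem~\ref{th:rqf-one}: the diagonal part equals $\frac12\Delta$ and the cross part vanishes. You instead use the $O(n)$-conjugation invariance of $Q$. The isotropy group of $x$ acts on $T_x\bbS^{n-1}$ as the full $O(n-1)$. This forces the principal symbol to be a constant multiple of the metric and kills every invariant vector field (for $n=2$ only thanks to reflections, as you note). Your argument is shorter, explains why the one-point motion has no preferred direction, and works for any conjugation-invariant driving noise; the paper's computation gives the constant directly. It even gives Step~1 of Lemma~\ref{lem:zt} for free: the It\^{o} diffusion matrix is exactly $P_x$, so $c=\frac12$, and applying $\frac12\Delta$ to the coordinate functions gives the It\^{o} drift $-\frac{n-1}{2}X_t$. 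Likewise, your orbit argument ($O(n)$ acts transitively on pairs with a given inner product) is a cleaner reason than the paper's for $u_t$ being an autonomous diffusion.

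The gap is that the decisive computation is never carried out. You ``expect'' the form of $b$ and $\sigma^2$ and assert that $s(\pm1)$ is finite, but this does not follow from $b,\sigma^2$ vanishing at the endpoints. If near $u=1$ one has $b(u)\approx-b'(1)(1-u)$ and $\sigma^2(u)\approx\kappa(1-u)^2$, then $s'(u)\asymp(1-u)^{-2b'(1)/\kappa}$. This is integrable iff $b'(1)<\kappa/2$, which is precisely your condition $\alpha-\frac12\beta<0$ for $\rmd\log(1-u_t)$, and it has to be checked.

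The computation in Lemma~\ref{lem:zt} gives, for every $n$, $b(u)=-2u(1-u^2)$ and $\sigma^2(u)=4(1-u^2)^2$. The drift $-(n-1)u$ from the two one-point motions combines with the covariation $\rmd[X,Y]_t=\bigl((n-3)Z_t+2Z_t^3\bigr)\rmd t$ to give $-2u+2u^3$. Hence $b'(1)=4<8=\kappa/2$, $s(u)\propto\arcsin u$ is bounded, and your argument closes. More directly, $\Theta_t=\arcsin u_t$ satisfies $\rmd\Theta_t=2\cos\Theta_t\,\rmd B_t$. It is a bounded martingale, and finiteness of its quadratic variation forces $\Theta_\infty=\pm\frac{\pi}{2}$.

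Your remark on the sign is also off. This drift pushes $u$ toward $0$, i.e.\ away from the polar and anti-polar configurations. Synchronization is therefore purely noise-induced, coming from the $-\frac12\kappa$ It\^{o} term in $\log(1-u)$. Flipping the sign of $b$ would only strengthen the conclusion. What would destroy it is a repelling drift $\beta u(1-u^2)$ with $\beta\le-4$, i.e.\ at least twice as strong as the true one.
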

In other words, an arbitrary set of particles driven by the RQF converges to a polar/anti-polar configuration (see Figure \ref{fig:rqf}), and the poles of this configuration become uniformly distributed in the large time limit. We remark that the second part of Theorem \ref{th:intro-main} is also satisfied in the deterministic setting and, in that case, can be seen as a corollary of Theorem \ref{th:intro-dqf}. \emph{This illustrates the fact that the RQF is a consistent generalization of the DQF in terms of the synchronizing behavior, due to the persistent gradient structure}.

The rest of the paper is structured as follows. In Section \ref{sec:dqf} we give a rigorous description of the dynamics of the deterministic quadratic form. In Section \ref{sec:rds} we give an introduction to random dynamical system theory and synchronization by noise. In Section \ref{sec:main-distributional} we study invariant measures of the RQF and in Section \ref{sec:main-pw} the random attractor of the system. Finally, we outline potential generalizations and interpret the results from the machine learning perspective in Section \ref{sec:discussion}.

\paragraph{Acknowledgments.} AS is grateful to Mark A. Peletier and Anton Khvalyuk for many insightful discussions at the early stages of the project.
ME thanks Dennis Chemnitz, Robin Chemnitz and Michael Scheutzow for many insighful discussions on random attractors.
This work was supported by the Dutch Research Council (NWO), in the framework of the program VI.Vidi.233.133 `A Rigorous Framework for Transient Random Dynamics'.
\section{Deterministic Quadratic Form}
\label{sec:dqf}
\begin{figure}
    \centering
    \includegraphics[width=0.45\linewidth]{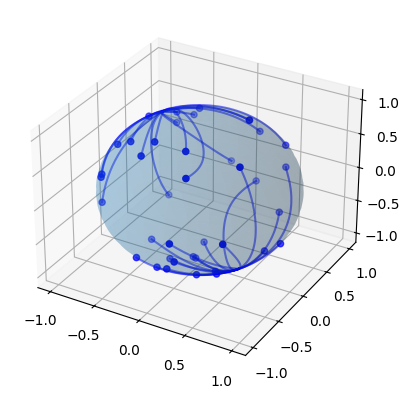}
    \includegraphics[width=0.5\linewidth]{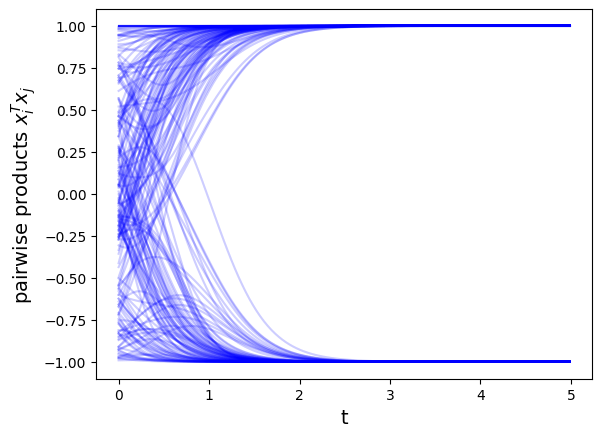}
    \caption{Solutions of the gradient flow of the deterministic quadratic form. On the left: trajectories with different initial conditions (marked by blue dots). On the right: dynamics of the pairwise scalar products between the trajectories.}
    \label{fig:dqf}
\end{figure}
Before we proceed to the analysis of the random model, we give a short description of the gradient flow dynamics of the deterministic quadratic form. In particular, let $M = M^T$ be an $n\times n$ real matrix with eigenvalues $\lambda_1 \geq \lambda_2 ... \geq \lambda_n$ and eigenvectors $\{v_i\}$. In addition, let $\Lambda_m =\text{span} \{v_1 ... v_k: \lambda_k = \lambda_1\}$ be the eigensubspace corresponding to the maximal eigenvalue. We consider the gradient flow dynamics of the quadratic functional on the unit sphere $\bbS^{n-1}$, namely
\begin{equation}
\label{eq:det-qf}
\dot x = - \nabla \left(\frac12x^TMx \right) = - P_xMx, \qquad x(0) = x_0.
\end{equation}
The system arises in the context of the principal component analysis and has been studied in, e.g., \cite{mahony1996gradient}.  It has been shown that for $\vol_{\bbS^{n-1}}$-a.e.\footnote{volume measure of the sphere in the standard topology}~initial condition $x_0$, in the large time limit the solution converges to the maximal eigensubspace, i.e., $\lim_{t\to \infty}\dist(x(t), \Lambda_m \cap \bbS^{n-1}) =0$. In particular, the following results hold:
\begin{theorem}[Deterministic Quadratic Form, {\cite[Theorem 2.1]{mahony1996gradient}}]
\label{th:det-qf}
     Let $M=M^T$ be a real symmetric $n \times n$ matrix with the maximal eigensubspace $\Lambda_m$. Then the gradient flow dynamics of the quadratic functional generated by $M$ on a sphere \eqref{eq:det-qf} has the following properties:
     \begin{itemize}
         \item the set of critical points is given by
\[
\Crit_M :=\{x\in \bbS^{n-1}: Mx = \lambda x,\  \lambda \in \{\lambda_1, \dots, \lambda_n \} \}.
\]
\item the exact solution to \eqref{eq:det-qf} is given by
\[
x(t)=e^{t M} x_0 /\left\|e^{t M} x_0\right\|,
\]
\item for all initial conditions $x_0 \in \bbS^{n-1}$, the solution $x(t)$ of the gradient flow \eqref{eq:det-qf}
exists and converges to a vector $x_{\infty} \in \bbS^{n-1}$ as $t \rightarrow \infty$. Moreover, for almost all initial conditions the solution $x(t)$ converges exponentially fast to some $x_\infty \in \Lambda_m \cap \bbS^{n-1}$.
\end{itemize}
\end{theorem}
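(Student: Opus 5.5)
The plan is to prove the explicit solution formula first and then read off the other two properties from it.

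\emph{Critical points.} The vector field $-P_xMx = -(Mx - (x^TMx)x)$ vanishes exactly when $Mx = (x^TMx)x$. Hence $x$ is a unit eigenvector of $M$, and its eigenvalue is $\lambda = x^TMx$. Conversely, every unit eigenvector satisfies this identity, so the zero set is precisely $\Crit_M$.

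\emph{Exact solution.} Set $y(t)=e^{tM}x_0$. Then $y(t)\neq 0$ for all $t$, since $e^{tM}$ is invertible. Put $x(t)=y(t)/\|y(t)\|$. Using $\dot y = My$ and $\frac{\rmd}{\rmd t}\|y\| = y^TMy/\|y\|$, we get
\[
\dot x = \frac{My}{\|y\|} - \frac{y\,(y^TMy)}{\|y\|^3} = Mx - (x^TMx)x = P_xMx.
\]
This is the flow of $+\frac12 x^TMx$; I will return to the sign below. The vector field is smooth and the sphere is compact, so solutions are global and unique, and the formula is therefore \emph{the} solution. The sign must be made consistent with the convention of \eqref{eq:det-qf}. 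With the minus sign, the formula becomes $e^{-tM}x_0/\|e^{-tM}x_0\|$, and the attracting eigenspace is the one for $\lambda_n$. Equivalently, we may replace $M$ by $-M$. I will follow the convention of the statement, in which the flow ascends to $\Lambda_m$, i.e. the solution is $e^{tM}x_0/\|e^{tM}x_0\|$. The argument is identical with $M\mapsto -M$.

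\emph{Convergence.} Expand $x_0=\sum_i c_i v_i$ in an orthonormal eigenbasis. Then $e^{tM}x_0=\sum_i c_i e^{t\lambda_i}v_i$. Let $\mu=\max\{\lambda_i : c_i\neq 0\}$ and let $\Pi_\mu$ be the orthogonal projection onto the eigenspace of $\mu$. Dividing numerator and denominator by $e^{t\mu}$, we obtain
\[
x(t)=\frac{\Pi_\mu x_0 + \sum_{\lambda_i<\mu} c_i e^{t(\lambda_i-\mu)}v_i}{\bigl\|\Pi_\mu x_0 + \sum_{\lambda_i<\mu} c_i e^{t(\lambda_i-\mu)}v_i\bigr\|}.
\]
Here $\Pi_\mu x_0\neq 0$ by the choice of $\mu$. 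Hence $x(t)\to x_\infty:=\Pi_\mu x_0/\|\Pi_\mu x_0\|$ for every $x_0$. The error is $O(e^{-t\delta})$, where $\delta$ is the gap between $\mu$ and the next smaller eigenvalue carrying a nonzero coefficient.

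\emph{Generic limit.} The set of $x_0$ with $\Pi_{\lambda_1}x_0=0$ is $\Lambda_m^\perp\cap\bbS^{n-1}$. This is a great subsphere of positive codimension, so it has $\vol_{\bbS^{n-1}}$-measure zero. For every other $x_0$ we have $\mu=\lambda_1$, so $x(t)\to x_\infty\in\Lambda_m\cap\bbS^{n-1}$ exponentially fast with rate $\lambda_1-\lambda_{k+1}$. If $\lambda_1=\lambda_n$, i.e. $M$ is scalar, then $P_xMx=0$, every point is fixed, and the claim is trivial.

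There is no genuine obstacle here. The only delicate point is the sign bookkeeping between the gradient flow of $\frac12x^TMx$ and the formula $e^{tM}x_0/\|e^{tM}x_0\|$. The convergence estimate itself is a routine dominant-term computation.
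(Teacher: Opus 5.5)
Your proof is correct. The paper gives no argument for this statement; it only quotes \cite[Theorem 2.1]{mahony1996gradient}. Your route is the standard proof of that result, so there is nothing substantively different to compare: you verify the normalized-exponential formula, then read off the limit $\Pi_\mu x_0/\|\Pi_\mu x_0\|$ and the exponential rate from the eigen-expansion, i.e.\ the continuous-time power method.

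Your sign remark is also right, and it identifies a genuine typo in the statement as printed. With the minus sign in \eqref{eq:det-qf}, the solution is $e^{-tM}x_0/\|e^{-tM}x_0\|$, and generic trajectories converge to the $\lambda_n$-eigenspace. The formula $e^{tM}x_0/\|e^{tM}x_0\|$ and the limit set $\Lambda_m\cap\bbS^{n-1}$ belong to the ascent flow $\dot x = P_xMx$. Proving that version and invoking $M\mapsto -M$ is the correct resolution. It does not affect the later use in Theorem~\ref{th:intro-dqf}, since for a GOE matrix both extreme eigenvalues are almost surely simple.
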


We illustrate the behavior of the gradient flow solutions on Figure \ref{fig:dqf}.

\begin{remark}[$\dim \Lambda_m = 1$] Note that for a matrix $M$ sampled from the Gaussian Orthogonal Ensemble\footnote{Gaussian Orthogonal Ensemble (GOE) is the probability distribution over the space of symmetric real matrices, defined as
\begin{equation}
    \label{eq:GOE}
    M \sim \text{GOE} \ \Leftrightarrow M_{ii} \sim \calN(0, 2),\  M_{ij} \sim \calN(0,1),
\end{equation}
and all entries $M_{i\leq j}$ are independent.}, $\dim \Lambda_m = 1$ with probability one. 
At the same time, when 
$\dim \Lambda_m = 1$, the attractor of the system, namely the set $\Lambda_m \cap \bbS^{n-1}$ consists of just two antipodal points. In the following sections we argue that a similar behavior is exhibited when $A$ is not a fixed matrix but a random process. 

We also remark that the increments of the noise process $Q_t$ are (scaled) GOE matrices, implying that w.p. $1$ the set of minimizers of the random driving functional consists of two anti-polar points. Therefore, the convergence to an anti-polar configuration is natural in the random case.
\end{remark}
\begin{remark}[Wasserstein gradient flow]
If instead of a single particle $x$ we are given an initial probability distribution $\rho_0 \in \calP(\bbS^{n-1})$ where all the points move according the dynamics \eqref{eq:det-qf}, then the evolution on $\rho$ is given by the \emph{Wasserstein gradient flow} of the potential energy 
\[
\calF_M(\rho) :=\int_{\bbS^{n-1}} x^TMx\ \rmd \rho(x),
\]
namely the PDE of the form:
\begin{equation}
\label{eq:wgf}
\partial_t\rho =-\nabla^{W_2} \calF_M(\rho) %
\end{equation}
where $\nabla^{W_2} \calF_M(\rho) = \nabla \cdot (\rho \nabla \frac{\delta \calF_M}{\delta \rho})$ is often called the \emph{Wasserstein gradient} of $\calF_M$ and $\frac{\delta \calF_M}{\delta \rho}$ denotes the first variation of $\calF_M$. %
The PDE \eqref{eq:wgf} can be interpreted as a weak form of the gradient flow \eqref{eq:det-qf} and, as follows from Theorem~\ref{th:det-qf},
 the critical points of \eqref{eq:wgf} are all possible measures $\tilde \rho$ supported on the set $\Crit_M$. Stationarity of such measures is transparent since the Wasserstein gradient takes the following form (in a weak sense):
 \[
 \nabla^{W_2} \calF_M(\rho)(x) = -\nabla \cdot (\rho(x) Mx  )
 \]
 and thus $\nabla^{W_2} \calF_M(\rho)= 0$ on every measure $\rho$ supported on $\Crit_M$.
\end{remark}
\begin{remark}[Random Wasserstein gradient flow]
\label{rem:random-wgf}
    Extending the formal calculation from the introduction to the Wasserstein spaces of probability measures one can also consider a (degenerate) stochastic PDE of the form
    \begin{equation*}
\partial_t\rho = \nabla \cdot (\rho(x) \rmd Q_t x),
\end{equation*}
as the distributional counterpart of the RQF. We call the model degenerate since the noisy process is the same at every $x\in \bbS^{n-1}$, instead of the classical case of the driving process being the space-time white-noise. We also remark that well-posedness of models of this type was established in \cite{coghi2019stochastic, gess2019stochastic}.

We also argue that randomizing the driving functional is one of the natural ways to introduce randomness into the framework of Wasserstein gradient flows and thus, the RQF model can be seen as an elementary example of a class of `random Wasserstein gradient flows', defined as
\[
    \partial_t\rho = -\nabla^{W_2}  \calF(\rho, \theta_t),
\]
where $\theta$ is the noisy variable. Under certain regularity and measurability conditions, such a dynamics can be studied using the random dynamical systems framework, which we introduce in the following section.
\end{remark}
\section{Background on Random Dynamical Systems}
\label{sec:rds}
\subsection{RDS and SDEs}
We introduce the notion of a (global) continuous random dynamical system in two-sided continuous time $\mathbb R$ (see, e.g., the canonical reference \cite{Arnold98}). Since our analysis is focused on the compact Riemannian manifold $\mathbb S^{n}$, we do not need to regard issues of finite-time blow-up that occur, e.g., in $\mathbb R^n$. We will work with the following definition of an RDS on a Polish space $\calX$ (i.e., complete, separable metric space) and then use the manifold structure when necessary:
\begin{definition}[Random dynamical system (RDS)] Let $(\Omega, \calF, \bbP)$ be an abstract probability space and $(\calX, \calB(\calX))$ be a Polish space (with the corresponding $\sigma$-algebra). An RDS consists of two parts: 
\begin{enumerate}
    \item[(a)] The model of the noise in form of a family of measurable maps $(\theta_t: \Omega \to \Omega)_{t \in \mathbb R}$, satisfying 
\begin{itemize}
    \item $\theta_0\omega = \omega, \ \forall \omega\in \Omega$, \  $\theta_{t+s}\omega = \theta_{t}\theta_{s}\omega, \ \forall t, s \in \bbR, \omega \in \Omega$,
    \item $\bbP(A) = \bbP(\theta^{-1}_t A), \ \forall t\in\bbR, A\in \calF$,
\end{itemize}
also called \emph{metric dynamical system} $((\theta_t)_{t \in \mathbb R}, \Omega, \mathbb P)$.

\item[(b)] The model of the dynamics, a measurable map $\varphi: \bbR \times \Omega \times \calX \to \calX$ which, for all $\omega \in \Omega$ and $x \in \mathcal X$ satisfies the \emph{cocycle property} 
\begin{equation} \label{eq:cocycle}
    \varphi(0, \omega, x) = x, \ \text{ and } \ \varphi(t+s, \omega, x) = \varphi(t, \theta_s\omega, \varphi(s, \omega, x)), \ \forall s, t \in \bbR.
\end{equation} 
\end{enumerate}
The RDS is called continuous if the map $\varphi(\cdot, \omega, \cdot): \bbR \times  \calX \to \calX$ is continuous for all $\omega \in \Omega$. 

\noindent  If $\cal X$ is a Riemannian manifold, the RDS is called $C^k$ if the map $\varphi(t, \omega, \cdot): \calX \to \calX$ is $C^k$ for all $\omega \in \Omega$, $t \in \mathbb R$. 
\end{definition}

Going back to \cite{kunita1990stochastic, arnold1995perfect}, it has been shown that a Stratonovich SDE with regular enough coefficients generates a unique RDS. The underlying noise model is given as follows: consider the space $\Omega=C_0\left(\mathbb{R}, \mathbb{R}^m\right)$ of all continuous functions $\omega: \mathbb{R} \rightarrow \mathbb{R}^m$ satisfying $\omega(0)=0$. We define the metric $d$ on $\Omega$ as
\[
d(\omega, \widehat{\omega}):=\sum_{n=1}^{\infty} \frac{1}{2^n} \frac{\|\omega-\widehat{\omega}\|_n}{1+\|\omega-\widehat{\omega}\|_n}, \quad \|\omega-\widehat{\omega}\|_n:=\sup _{|t| \leq n}\|\omega(t)-\widehat{\omega}(t)\|,
\]
and let $\mathcal{F}=\mathcal{B}(\Omega)$ be the Borel $\sigma$-algebra on $(\Omega, d)$. Then the Wiener measure corresponding to the $m$-dimensional Wiener process $\left(W_t^1(\omega), \ldots, W_t^m(\omega)\right)^{\top}:=\omega(t)$ is given by
\[
\mathbb{P}\left(\left\{\omega \in \Omega: \omega_1(t) \leq x_1, \ldots, \omega_m(t) \leq x_m\right\}\right)=\frac{1}{(2 \pi t)^{d / 2}} \int_{-\infty}^{x_1} \cdots \int_{-\infty}^{x_m} e^{-\|y\|^2 / 2|t|} \mathrm{d} y_1 \cdots \mathrm{~d} y_m,
\]
for all $x \in \mathbb{R}^m$.
The family of shifts $\left(\theta_t\right)_{t \in \bbR}$ on the probability space $(\Omega, \mathcal{F})$, given by
\begin{equation}
\label{eq:shift}
    \theta_t \omega(\cdot):=\omega(t+\cdot)-\omega(t),
\end{equation}
is measure preserving (and even ergodic), and forms the metric dynamical system $((\theta_t), \Omega, \mathbb P)$

One can then formulate the following statement as a direct corollary of \cite[Theorem 2.3.42]{Arnold98}, assumuing that $\calX$ is a compact $n$-dimensional Riemannian manifold (as, e.g., $\mathbb S^{n}$).

\begin{proposition}[SDE as an RDS] \label{prop:SDE}
Consider the Stratonovich SDE of the form
\begin{equation}
\label{eq:sde}
\rmd X_t=F_0\left(X_t\right) \rmd t+\sum_{j=1}^m F_j\left(X_t\right) \partial W_t^j, \quad X_0=x \in \calX, t \in \mathbb{R},
\end{equation}
where $F_0 \in C^{k, \delta}(\calX, T\calX)$ and every $F_j \in C^{k+1, \delta}(\calX, T\calX)$  for some $k \in \mathbb{N}$ and $\delta \in(0,1]$. 
Then there exists a unique measurable function $\varphi(t, \omega, x)$ such that
\begin{itemize}
\item $\varphi(t, \cdot, x)$ is the solution of the SDE \eqref{eq:sde},
    \item $(\theta, \varphi)$ is a $C^k $ RDS with $\theta$ as in Eq.~\eqref{eq:shift}.
\end{itemize}
\end{proposition}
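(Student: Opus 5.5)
The plan is to deduce Proposition~\ref{prop:SDE} from \cite[Theorem 2.3.42]{Arnold98}. The argument is local-to-global: cover the compact manifold $\calX$ by finitely many charts, and pass to the global object by embedding. By Whitney (or Nash for the Riemannian case), embed $\calX$ smoothly into some $\bbR^N$. Extend the vector fields $F_0,\dots,F_m$ to compactly supported fields $\tilde F_j$ on $\bbR^N$ that keep the same H\"older regularity, namely $\tilde F_0\in C^{k,\delta}$ and $\tilde F_j\in C^{k+1,\delta}$. One way to build the extension is to take a tubular neighbourhood, compose with the nearest-point projection $\pi$ (which is smooth near $\calX$), and multiply by a cutoff function. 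The extended fields are globally Lipschitz and bounded, so the Euclidean version of \cite[Theorem 2.3.42]{Arnold98} applies to them on the canonical Wiener space $(\Omega,\calF,\bbP,\theta)$ with the shift \eqref{eq:shift}. That theorem yields a unique (up to indistinguishability) perfect $C^k$ cocycle $\tilde\varphi$ on $\bbR^N$, defined for all $t\in\bbR$, that solves the Stratonovich equation.

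The next step is to show that $\calX$ is invariant under $\tilde\varphi$. The extended fields are tangent to $\calX$ on $\calX$. Hence the Stratonovich equation started at $x\in\calX$ can be solved intrinsically on the manifold, by patching chart solutions up to stopping times, and that solution also solves the extended equation. Pathwise uniqueness then forces $\tilde\varphi(t,\omega,x)\in\calX$. An alternative route avoids patching: apply the Stratonovich chain rule to $\dist(\cdot,\calX)^2$, or to the defining functions of the embedding. Stratonovich calculus obeys the ordinary chain rule, so tangency gives zero drift and zero diffusion for this quantity, and it therefore stays at $0$. Backward time is handled in the same way, since the inverse flow solves the equation with reversed fields. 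Having established invariance, we define $\varphi:=\tilde\varphi|_{\bbR\times\Omega\times\calX}$. The cocycle property \eqref{eq:cocycle} and joint continuity are inherited directly. The $C^k$ regularity of $\varphi(t,\omega,\cdot)$ on $\calX$ follows because it is the restriction of a $C^k$ map to an embedded submanifold that the map preserves.

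For uniqueness, suppose $\psi$ is a second measurable map with these properties. Then $\psi(\cdot,\cdot,x)$ solves the extended SDE for each $x$. Pathwise uniqueness gives $\psi(t,\cdot,x)=\varphi(t,\cdot,x)$ almost surely for each fixed $(t,x)$, and continuity upgrades this to indistinguishability. This is the sense of uniqueness intended in the statement.

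I expect the main obstacle to be the \emph{perfection} of the cocycle. The cocycle identity has to hold for \emph{all} $\omega$ and all $s,t$, not merely almost surely for fixed $s$, as Definition (b) requires. This is exactly the technical content of Arnold's theorem, which invokes Kunita's flow theory together with a perfection argument on a $\theta$-invariant full-measure set, with $\varphi$ redefined as the identity off that set. I will not redo this argument; the task is only to check that the extension-and-restriction construction preserves it. It does, because restricting to the invariant set $\calX$ commutes with the shift $\theta$.
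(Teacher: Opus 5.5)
Your proposal is correct and follows the paper's route: the paper states the proposition as a direct corollary of \cite[Theorem 2.3.42]{Arnold98} on the compact manifold $\calX$ and gives no further argument. Your embedding, extension, invariance and restriction steps make explicit the standard reduction from the Euclidean setting that the citation leaves implicit, and in both versions the perfection of the cocycle, the $C^k$ regularity and uniqueness are delegated to Arnold's theorem.
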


For any RDS, one may consider the corresponding deterministic dynamical system $(\Theta_t)_{t \in \mathbb R}$ on $\Omega \times  \calX$, called \emph{skew product flow}. For every $t \in \mathbb R$ the map $\Theta_t: \Omega \times \calX \to  \Omega \times \calX$
is given by
$$\Theta_t(\omega, x) := (\theta_t \omega, \varphi(t, \omega, x)).$$
Indeed the flow properties $\Theta_0 = \Id_{\Omega \times \calX}$ and $\Theta_{s+t}(\omega,x) = \Theta_s(\Theta_t(\omega,x))$, whenever all terms are well-defined, can be deduced from the cocycle property \eqref{eq:cocycle}.

Now, for any $u,v \in \mathbb R$ with $u < v$, we denote by $\mathcal{F}_u^v \subset \mathcal F$ the sub-$\sigma$-algebra generated by  the random variable $\varphi(t, \theta_s \omega, x)$ for $x \in \calX$ and $t, s \in \mathbb R$ with $u\leq s\leq v$ and $0< t \leq v-s$, (and the subsets of $\mathcal F$ of zero measure).

This means, in particular, that $\varphi(t, \theta_s \omega, x)$ is $\mathcal F_s^{s+t}$-measurable. We define the $\sigma$-algebras 
\begin{align*}
\mathcal F_{-\infty}^v &= \sigma( \mathcal F_u^v \,:\, u \in \mathbb R, u < v ),\\
\mathcal F_{u}^{\infty} &= \sigma( \mathcal F_u^v \,:\, v \in \mathbb R, u < v ),\\
\mathcal F_{-\infty}^{\infty} &= \sigma( \mathcal F_u^v \,:\, v, u \in \mathbb R, u < v )
\end{align*}
\begin{definition}
The RDS $(\theta, \varphi)$ is said to be \emph{Markov} if $\mathcal F_{-\infty}^0$ and $\mathcal{F}_0^{\infty}$ are independent, i.e., if past and future are independent.
\end{definition}
For such random dynamical systems, one may  introduce the transition function as
\begin{equation}\label{eq:transition_RDS}
\hat P_t(x, B) = \mathbb{P} (\{ \varphi(t, \cdot,x) \in B \}), \ x \in X, \ B \in \mathcal B(\calX), \ t \in \mathbb R_+.
\end{equation}
It is now straightforward to construct a Markov process out of the Markov RDS in a canonical way. For SDEs of the form \eqref{eq:sde}, these are exactly the solution processes. In the following, we will focus on Markov RDS.

\subsection{Random attractors and sample measures}
\label{sec:attractors}

The RDS framework allows to study the long-time behavior of the underlying system for each given noise realization. 
In particular, one of the key objects in the theory of RDS  are \emph{random attractors}: random sets attracting the trajectories of the dynamical system.

In more detail, a map $A: \Omega \to \mathcal \calB(\calX)$ is called a \emph{random compact set}, if $A(\omega)$ is almost surely a non-empty compact set and for each $x \in \mathbb \cal X$ the map $\dist(x, A(\cdot)) \coloneqq \inf_{y \in A(\cdot)} d(x,y)$ is $\mathcal F$-measurable (see, e.g., \cite[Definition 1.6.1]{Arnold98}). It is furthermore called \emph{measurable with respect to the past}, if the map $\dist(x, A(\cdot))$ is $\mathcal F_{-\infty,0}$-measurable. A random compact set is called $\varphi$\emph{-invariant}, if  for all $t\geq 0$
$$\varphi(t, \omega, A(\omega)) = A(\theta_t(\omega)).$$
Here, we will focus on \emph{point attractors} as opposed to \emph{set attractors} since we are interested in the behavior of the $N$-point motion as opposed to topological properties of sets.

    \begin{definition}[Random Point Attractor]\label{def:Attr}
   A random compact $\varphi$-invariant set $(A(\omega))_{\omega \in \Omega}$ is called
	\begin{enumerate}
		\item[(i)] \emph{pull-back point attractor}, if it is measurable with respect to the past and for every $x \in \cal X$
            $$\dist\big(\varphi(t, \theta_{-t}\omega, x), A(\omega)\big) \to 0, \faa \omega \in \Omega.$$
		\item[(ii)] \emph{forward point attractor}, if for every $x \in \cal X$
            $$\dist\big(\varphi(t, \omega, x), A(\theta_t \omega)\big) \to 0, \faa \omega \in \Omega.$$
	\end{enumerate}
 Replacing the almost-sure convergence with convergence in probability yields the definition of a \emph{weak point attractor}; note that weak pull-back and forward attraction are equivalent due to the invariance of $\mathbb P$ under $\theta_t$.
\end{definition}

The existence of a weak point attractor, which is the form of random attractor we focus on unless stated otherwise, is now closely linked to the support of invariant measures for the Markov RDS.
The \emph{correspondence theorem} (cf., e.g., \cite[Theorem 4.2.9.]{KuksinShirikyan12}) establishes a one-to-one correspondence between stationary distributions $\rho$ of the induced Markov process and invariant measures $\mu$ of the skew product flow whose disintegrations are measurable with respect to the past $\mathcal F_{-\infty, 0}$, called \emph{Markov measures}. 
In the following, we call a random variable with a certain property \emph{essentially unique}, if it coincides with every other random variable with that property up to a set of zero measure, and denote the set of probability measures on $\Omega \times \cal X$ by $ \calP(\Omega \times \cal X)$. We give the following short version of the correspondence theorem:
\begin{theorem}[Correspondence Theorem]\label{theo:Correspondence}
Let $(\theta, \varphi)$ be a Markov RDS with unique stationary measure $\rho$. Then the weak limit 
$$ \varphi(t, \theta_{-t}\omega, \cdot)^* \rho \stackrel{w}{\to} \mu_\omega,  \ \text{ as } t \to \infty,$$
exists almost surely and $\mu_\omega$ is the essentially unique $\mathcal F_{-\infty, 0}$-measurable random measure, such that the Markov measure $ \mu \in \calP(\Omega \times \cal X)$ given by
$$ \mu(\dd \omega, \dd x) = \mathbb P(\dd \omega) \mu_\omega(\dd x)$$
is $\Theta$-invariant. The $\cal X$-marginal of $\mu$ is given by  $\mathbb E[\mu_\omega] = \rho.$ 
\end{theorem}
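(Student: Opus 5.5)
The approach is to build $\mu_\omega$ as a martingale limit of pushed-forward measures and then use the cocycle property to get $\Theta$-invariance and uniqueness.

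\textbf{Construction and existence of the limit.} Define $\mu^t_\omega := \varphi(t,\theta_{-t}\omega,\cdot)^*\rho$. Fix a continuous test function $f\in C(\calX)$ and set $M_t(\omega):=\int f\,\rmd\mu^t_\omega = \int f(\varphi(t,\theta_{-t}\omega,x))\,\rho(\rmd x)$. This quantity is $\mathcal F_{-t}^0$-measurable. The cocycle property gives
\[
\varphi(t+s,\theta_{-t-s}\omega,x)=\varphi(t,\theta_{-t}\omega,\varphi(s,\theta_{-t-s}\omega,x)),
\]
where the inner map is $\mathcal F_{-t-s}^{-t}$-measurable. Conditioning on $\mathcal F_{-t}^0$ and using the Markov property (independence of past and future), the law of $\varphi(s,\theta_{-t-s}\omega,\cdot)^*\rho$ averaged over $\mathcal F_{-t-s}^{-t}$ equals $\rho \hat P_s=\rho$, by stationarity and $\theta$-invariance of $\bbP$. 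Hence
\[
\bbE[M_{t+s}\mid \mathcal F_{-t}^0]=\int \hat P_s\big(f\circ\varphi(t,\theta_{-t}\omega,\cdot)\big)\,\rmd\rho\ \text{(appropriately)}=M_t,
\]
so $(M_t)$ is a bounded martingale with respect to the increasing filtration $\mathcal F_{-t}^0$. Bounded martingale convergence yields an a.s. limit for each $f$. Taking $f$ in a countable dense subset of $C(\calX)$ (using separability and, in our compact setting, tightness for free), I obtain an exceptional null set outside which $\mu_\omega^t$ converges weakly to a probability measure $\mu_\omega$. This $\mu_\omega$ is $\mathcal F_{-\infty}^0$-measurable.

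\textbf{Marginal and invariance.} $\bbE M_t=\int \hat P_t f\,\rmd\rho=\int f\,\rmd\rho$, and dominated convergence gives $\bbE\mu_\omega=\rho$. For invariance, I use the identity
\[
\varphi(s,\omega,\cdot)^*\mu^t_\omega=\mu^{t+s}_{\theta_s\omega}.
\]
This follows from the cocycle property with $\omega$ replaced by $\theta_s\omega$. Letting $t\to\infty$ and using continuity of $\varphi(s,\omega,\cdot)$ gives $\varphi(s,\omega,\cdot)^*\mu_\omega=\mu_{\theta_s\omega}$ a.s. That is exactly $\Theta_s$-invariance of $\mu=\bbP\otimes\mu_\omega$. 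The null sets must be handled with care here: I would take $s$ rational, or use the fact that $\theta_s$ preserves $\bbP$.

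\textbf{Uniqueness (the main obstacle).} Let $\nu_\omega$ be another $\mathcal F_{-\infty}^0$-measurable random measure with $\bbP\otimes\nu_\omega$ invariant. Its marginal $\bbE\nu_\omega$ is stationary for $\hat P_t$: invariance gives $\nu_\omega=\varphi(t,\theta_{-t}\omega,\cdot)^*\nu_{\theta_{-t}\omega}$, the measure $\nu_{\theta_{-t}\omega}$ is $\mathcal F_{-\infty}^{-t}$-measurable, and it is therefore independent of $\varphi(t,\theta_{-t}\omega,\cdot)$. By uniqueness of $\rho$, $\bbE\nu=\rho$. The same independence then gives
\[
\bbE\Big[\int f\,\rmd\nu_\omega\,\Big|\,\mathcal F_{-t}^0\Big]=\int f\circ\varphi(t,\theta_{-t}\omega,\cdot)\,\rmd\,\bbE\nu=M_t .
\]
Letting $t\to\infty$, martingale convergence together with the $\mathcal F_{-\infty}^0$-measurability of $\nu$ yields $\int f\,\rmd\nu_\omega=\lim M_t=\int f\,\rmd\mu_\omega$ a.s. Over a countable dense family of $f$, this gives $\nu=\mu$ a.s.

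The delicate point is justifying this independence and conditioning rigorously. One needs the $\sigma$-algebras $\mathcal F_{-t}^0$ and $\mathcal F_{-\infty}^{-t}$ to be independent, which follows from the Markov property combined with $\theta$-invariance. One also needs joint measurability of $(\omega,x)\mapsto\varphi$ in order to apply Fubini inside the conditional expectation.
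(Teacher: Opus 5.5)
The paper gives no proof of its own here (it only cites \cite[Theorem 4.2.9]{KuksinShirikyan12}), and your argument is the standard martingale proof behind that result: $t\mapsto\int f\,\rmd\mu^t_\omega$ is a bounded $\mathcal F_{-t}^0$-martingale, invariance follows from the cocycle identity $\varphi(s,\omega,\cdot)^*\mu^t_\omega=\mu^{t+s}_{\theta_s\omega}$, and uniqueness follows from $\bbE[\nu_\omega(f)\mid\mathcal F_{-t}^0]=M_t$ — it is correct. The points you leave implicit are harmless in the paper's setting of continuous RDS on compact $\calX$: separability of $C(\calX)$ and automatic tightness use compactness (so your proof covers that case rather than a general Polish $\calX$), and a.s.\ convergence along all real $t$, not just a countable set, needs path regularity of $M_t$, which holds because $\varphi(t,\theta_{-t}\omega,\cdot)=\varphi(-t,\omega,\cdot)^{-1}$ depends continuously on $t$.
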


The random measures $(\mu_\omega)_{\omega \in \Omega}$ are called \emph{sample measures} or \emph{statistical equilibria}.
As an immediate consequence of the $\Theta$-invariance of $ \mu$, the sample measures must be almost surely invariant, i.e. for every $t \in \mathbb R$ and almost every $\omega \in \Omega$ we have
$$\varphi(t, \omega, \cdot)^*\mu_\omega =  \mu_{\theta_t\omega}.$$

We now have the following characterization of a weak point attractor via these measures:

\begin{proposition}[Existence of a Weak Point Attractor]\label{prop:ExWeakPntAttr}
 Let $(\mu_\omega)_{\omega \in \Omega}$ be the sample measures defined in Theorem \ref{theo:Correspondence}. Then, if the random set 
 $$A(\omega) := \operatorname{supp}(\mu_\omega)$$
 is compact, it is a weak point attractor and it is minimal, i.e., any weak point attractor $\tilde A(\omega)$ satisfies $\tilde A(\omega) \supseteq A(\omega)$ almost surely.
\end{proposition}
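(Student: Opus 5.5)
We will prove Proposition~\ref{prop:ExWeakPntAttr} in three steps: invariance and measurability of $A$, weak attraction in pull-back form, and minimality. The key tools are Theorem~\ref{theo:Correspondence} and the invariance of $\bbP$ under $\theta_t$.

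\emph{Invariance and measurability.} By Theorem~\ref{theo:Correspondence}, $\mu_\omega$ is $\calF_{-\infty,0}$-measurable and satisfies $\varphi(t,\omega,\cdot)^*\mu_\omega=\mu_{\theta_t\omega}$ almost surely. Since $\varphi(t,\omega,\cdot)$ is a homeomorphism of the compact space $\calX$ (continuous RDS in two-sided time), the support of a push-forward is the image of the support. This gives
\[
\varphi(t,\omega,A(\omega))=A(\theta_t\omega)\quad\text{a.s.}
\]
Measurability of $\omega\mapsto \dist(x,\supp\mu_\omega)$ with respect to $\calF_{-\infty,0}$ follows from $\dist(x,\supp\mu_\omega)=\sup\{r\ge 0:\mu_\omega(B(x,r))=0\}$. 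The supremum may be taken over rational $r$, and each $\omega\mapsto\mu_\omega(B(x,r))$ is measurable.

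\emph{Weak attraction.} Since $\theta_t$ preserves $\bbP$, it suffices to show pull-back convergence in probability. Fix $x\in\calX$ and $\varepsilon>0$, and let $f_\omega(y):=\min(1,\dist(y,A(\omega))/\varepsilon)$. This function is continuous in $y$, takes values in $[0,1]$, and vanishes on $A(\omega)$, so $\int f_\omega\,\rmd\mu_\omega=0$.

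The key identity, which I expect to be the main obstacle, is that $\varphi(t,\theta_{-t}\omega,x)$ has conditional law given $\calF_{-\infty,0}$ \emph{close to} $\mu_\omega$ when $x$ is distributed according to $\rho$. Concretely, Theorem~\ref{theo:Correspondence} gives
\[
\int \bbE\big[f_\omega(\varphi(t,\theta_{-t}\omega,y))\big]\rho(\rmd y)\to \bbE\int f_\omega\,\rmd\mu_\omega=0,
\]
by weak convergence and dominated convergence. Here $f_\omega$ is a fixed continuous test function for each $\omega$ and is independent of $t$. Hence $\bbP\big(\dist(\varphi(t,\theta_{-t}\omega,y),A(\omega))>\varepsilon\big)\to 0$ in $L^1(\rho(\rmd y))$.

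To pass from $\rho$-a.e.\ $y$ to every $x$, one needs an additional ingredient. The first option is to use strong Feller/irreducibility of the Markov process, so that $\hat P_s(x,\cdot)\ll\rho$ for some $s>0$. One then applies the cocycle property, starting the motion from $\varphi(s,\theta_{-t}\omega,x)$, whose law is absolutely continuous with respect to $\rho$, together with the independence of past and future for the Markov RDS. Alternatively, since the statement is in the generic form of the literature, I would cite the standard argument (e.g.\ Flandoli--Gess--Scheutzow) that the support of the sample measure attracts points weakly. In the setting where this is applied (Brownian one-point motion with uniform $\rho$), absolute continuity holds, so I will assume it.

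\emph{Minimality.} Let $\tilde A$ be any weak point attractor. Integrating the weak attraction over $x\sim\rho$ gives $\varphi(t,\theta_{-t}\omega,\cdot)^*\rho(\{y:\dist(y,\tilde A(\omega))>\varepsilon\})\to0$ in probability. Passing to a subsequence with almost-sure convergence and using weak convergence to $\mu_\omega$, together with the Portmanteau theorem applied to the open set $\{\dist>\varepsilon\}$, yields $\mu_\omega(\{\dist(\cdot,\tilde A(\omega))>\varepsilon\})=0$ for all rational $\varepsilon$. Hence $\supp\mu_\omega\subseteq\tilde A(\omega)$ almost surely, using that $\tilde A(\omega)$ is closed.
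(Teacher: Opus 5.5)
The paper gives no argument of its own here; it only invokes \cite[Proposition 2.20]{FGS1}. Your invariance/measurability step and your minimality argument are correct. The minimality argument integrates the attraction of $\tilde A$ over $\rho$, passes to an a.s.\ convergent subsequence, and applies Portmanteau to the open set $\{\dist(\cdot,\tilde A(\omega))>\varepsilon\}$.

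The genuine gap is the one you flag yourself. You only prove $\int \bbP(\dist(\varphi(t,\theta_{-t}\omega,y),A(\omega))>\varepsilon)\,\rho(\rmd y)\to 0$, whereas a weak point attractor must attract \emph{every} fixed $x$. The repair you sketch does not work. Independence of past and future makes $Y:=\varphi(s,\theta_{-t-s}\omega,x)$ independent of the map $\Phi:=\varphi(t,\theta_{-t}\omega,\cdot)$. It does not make $Y$ independent of the target $A(\omega)=\Phi(A(\theta_{-t}\omega))$, because $A(\theta_{-t}\omega)$ is driven by the same noise on $[-t-s,-t]$ as $Y$. Decoupling correctly yields
\[
\bbP\big(\dist(\varphi(t+s,\theta_{-t-s}\omega,x),A(\omega))>\varepsilon\big)=\bbE\, G_t\big(\varphi(s,\theta_{-s}\omega,x),A(\omega)\big),
\]
where $G_t(y,K):=\bbP\big(\dist(\varphi(t,\omega,y),\varphi(t,\omega,K))>\varepsilon\big)$. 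Your averaged estimate says exactly $\int G_t\,\rmd(\rho\otimes\law(A))\to 0$. So the one-point condition $\hat P_s(x,\cdot)\ll\rho$ is not the relevant one. You would need the \emph{joint} law of $(\varphi(s,\theta_{-s}\omega,x),A(\omega))$ to be absolutely continuous with respect to $\rho\otimes\law(A)$. That is a property of the motion of $x$ together with the points of $A$, not of the one-point motion.

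Some such extra input is unavoidable, because under only the hypotheses of Theorem~\ref{theo:Correspondence} the attraction claim is false. Take the deterministic flow on $\mathbb{T}^2$ generated by $f\cdot(1,\alpha)$, with $\alpha\notin\mathbb{Q}$ and $f\ge 0$ smooth, vanishing only at $p$, and $\int f^{-1}=\infty$ (e.g.\ $f\sim|z-p|^2$). View it as a trivially Markov RDS.
\begin{itemize}
\item For any invariant probability $\nu$, the measure $f\nu$ is invariant for the irrational linear flow, so $f\nu=\kappa\,\mathrm{Leb}$.
\item Finiteness of $\nu$ forces $\kappa=0$. Hence $\rho=\delta_p$ is the unique stationary measure and $\mu_\omega=\delta_p$.
\item Every point off the half-line flowing into $p$ has a dense forward orbit, so $\{p\}$ is not a point attractor. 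Minimality still holds.
\end{itemize}
So neither ``assume absolute continuity'' nor ``cite the standard argument'' closes the gap; the missing hypothesis has to be identified and verified.

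In the RQF case you can bypass this with two-point information. Using auxiliary randomness, pick $\xi$ with conditional law $\mu_\omega$ given $\calF_{-\infty}^0$. Then $\xi\sim\bar\rho$ is independent of $\calF_0^\infty$. Since the cocycle is odd and $\mu_\omega$ is symmetric, $\pm\varphi(t,\omega,\xi)\in A(\theta_t\omega)$. Step~2 of the proof of Theorem~\ref{th:rqf-attractor}, combined with Fubini, then gives $\dist(\varphi(t,\omega,x),A(\theta_t\omega))\le\min_{\pm}|\varphi(t,\omega,x)\mp\varphi(t,\omega,\xi)|\to 0$ in probability.
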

\begin{proof}
  This follows directly from \cite[Proposition 2.20]{FGS1}.
\end{proof}
Note that compactness of $\operatorname{supp}(\mu_\omega)$ follows trivially if we are on a compact metric space like $\mathbb S^n$. 

In general, the statistical equilibrium of a random dynamical system is either fully discrete or contains no point masses. In particular, as follows from \cite{LeJan1987}, we can state (see also \cite[Proposition 2.19]{FGS2}):
\begin{proposition}[Discrete vs.~continuous $\mu_\omega$]
\label{prop:discrete_cont}
   The sample measures $\mu_\omega$ are either $\Omega$-a.s. continuous, i.e., satisfy $\mu_\omega(\{x\}) = 0$ for every $x\in\calX$, or are $\Omega$-a.s.~supported on a finite number of $\mathcal F_{- \infty}^0$-measurable random points $\{a_1(\omega), \dots, a_N(\omega)\}$ and given by
\begin{equation}
   \label{eq:discrete-mu} 
\mu_{\omega} = \frac{1}{N} \sum_{i=1}^N \delta_{a_i(\omega)}.
\end{equation} 
\end{proposition}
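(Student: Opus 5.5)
The plan is Le Jan's dichotomy argument, organized around the maximal atom mass of the sample measures. Set
\[
m(\omega) := \max_{x\in\calX} \mu_\omega(\{x\}).
\]
This is a well-defined random variable: a probability measure has at most countably many atoms, so the maximum is attained whenever it is positive. I would first establish invariance. The sample measures satisfy $\varphi(t,\omega,\cdot)^*\mu_\omega=\mu_{\theta_t\omega}$, and each $\varphi(t,\omega,\cdot)$ is a homeomorphism of $\calX$ (it is invertible by the cocycle property, with inverse $\varphi(-t,\theta_t\omega,\cdot)$). Hence atoms are mapped bijectively onto atoms with their masses preserved, and $m(\theta_t\omega)=m(\omega)$. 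Since the shift $\theta$ is ergodic, $m$ is almost surely equal to a constant $m_0\in[0,1]$. By the same reasoning, the number of atoms of maximal mass, $N(\omega)$, is $\theta$-invariant and hence almost surely constant. If $m_0=0$ we are in the continuous case, so from now on I assume $m_0>0$.

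The main step, and the one I expect to be the real obstacle, is to show that when $m_0>0$ the measure $\mu_\omega$ has no mass outside its maximal atoms. Write $\mu_\omega=\nu_\omega+r_\omega$, where $\nu_\omega$ is the restriction to the $N$ atoms of mass $m_0$. I would use the Markov structure through the pullback representation $\mu_\omega=\lim_{t\to\infty}\varphi(t,\theta_{-t}\omega,\cdot)^*\rho$ from Theorem~\ref{theo:Correspondence}, together with the conditional-expectation identity $\mathbb E[\mu_\omega\otimes\mu_\omega]=:\rho^{(2)}$. The measure $\rho^{(2)}$ is stationary for the two-point motion, and its restriction to the diagonal $\Delta$ is invariant. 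I then consider the function $g(x)=\mu_\omega(\{x\})$ and follow Le Jan's martingale argument. Let $\mathcal F_{-\infty}^{t}$ be the past filtration. Then $\mu_{\theta_t\omega}(\{\varphi(t,\omega,x)\})=\mu_\omega(\{x\})$, while for $\rho$-typical $x$ the quantity $\mathbb E[\mu_\omega(\{x\})\mid\mathcal F_{-\infty}^{t}]$ recovers the atom masses seen along the trajectory. Using this, I would show that
\[
\mathbb E\Bigl[\sum_{x}\mu_\omega(\{x\})^2\Bigr]=\rho^{(2)}(\Delta).
\]
I would then show that $\rho^{(2)}$ decomposes into a part on $\Delta$ and a part off $\Delta$, each invariant, and identify the diagonal part with $\rho$ scaled by the constant $\mathbb E\,\mu_\omega(\{x\})$ evaluated along trajectories. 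From this, the map $x\mapsto\mu_\omega(\{x\})$ is constant equal to $m_0$ on the atoms, $\mu_\omega$-almost everywhere on the atomic part. Concretely, the function $h_\omega(x)=\mu_\omega(\{x\})$ is invariant along the flow, so $\int h\,\dd\mu_\omega$ is a.s.\ constant. Applying ergodicity of $\rho$ (which follows from its uniqueness) to $\mathbb E[h_\omega(\cdot)]$ shows that this function is $\rho$-a.e.\ constant. This rules out atoms of different masses and rules out a continuous remainder, and hence forces $r_\omega=0$.

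Once $r_\omega=0$, all atoms have mass $m_0$, so $N=1/m_0$ is finite and $\mu_\omega=\frac1N\sum_{i=1}^N\delta_{a_i(\omega)}$. For measurability, $\mu_\omega$ is $\mathcal F_{-\infty}^0$-measurable. Ordering the atoms measurably, for instance lexicographically in local coordinates or via a measurable selection of the support of $\mu_\omega$, produces $\mathcal F_{-\infty}^0$-measurable random points $a_i$. This gives the claimed representation \eqref{eq:discrete-mu}.

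In practice, I expect the cleanest route to be to cite \cite{LeJan1987} and \cite[Proposition 2.19]{FGS2} for this middle step, after checking their hypotheses: a Markov RDS of homeomorphisms, ergodic $\theta$, and a unique stationary $\rho$. All of these hold in our setting by Proposition~\ref{prop:SDE} and Theorem~\ref{theo:Correspondence}.
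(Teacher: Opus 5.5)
Your fallback of citing \cite{LeJan1987} and \cite[Proposition 2.19]{FGS2} after checking the hypotheses is exactly what the paper does; it gives no argument beyond these references. Your self-contained sketch of the middle step, however, has a genuine gap.

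The decisive move is to apply ergodicity of $\rho$ to $x\mapsto\bbE[\mu_\omega(\{x\})]$, and this carries no information. For fixed $x$ one has $\bbE[\mu_\omega(\{x\})]=\rho(\{x\})$. That is identically $0$ whenever $\rho$ is non-atomic, whether or not $\mu_\omega$ has atoms, because the atoms sit at \emph{random} locations. The RQF itself shows this: $\rho=\bar\rho$ is uniform, so your function vanishes identically, yet $\mu_\omega=\frac12\delta_{a(\omega)}+\frac12\delta_{-a(\omega)}$ by Theorem~\ref{th:rqf-attractor}. The same problem affects the $\rho^{(2)}$ step. Knowing that both $\bbE[\sum_x\mu_\omega(\{x\})^2\delta_x]$ and $\bbE[\mu_\omega]$ are multiples of $\rho$ neither forces equal atom masses nor excludes a diffuse remainder, since expectation-level identities cannot see the $\omega$-wise structure. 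Your martingale is also mis-indexed: $\mu_\omega$ is already $\mathcal F_{-\infty}^0$-measurable, so conditioning on $\mathcal F_{-\infty}^t$ for $t\ge0$ does nothing; Le Jan's martingale runs along $\mathcal F_{-t}^0$ as $t\to\infty$.

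The invariant function $h_\omega(x)=\mu_\omega(\{x\})$ lives on $\Omega\times\calX$, so what you need is an $\omega$-wise rigidity statement. One option is ergodicity of the Markov measure $\bbP(\dd\omega)\mu_\omega(\dd x)$ under the skew product $\Theta$. This does follow from ergodicity of $\rho$, but it is a separate theorem, not the one you invoked. The more direct option is the essential uniqueness in Theorem~\ref{theo:Correspondence}.

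Concretely, with $m_0>0$ and $N$ as you define them, set $\nu_\omega:=\frac1N\sum_{x:\,\mu_\omega(\{x\})=m_0}\delta_x$.
\begin{itemize}
\item Since each $\varphi(t,\omega,\cdot)$ is a bijection, it maps atoms of mass $m_0$ onto atoms of mass $m_0$, so $\varphi(t,\omega,\cdot)^*\nu_\omega=\nu_{\theta_t\omega}$.
\item Being built from $\mu_\omega$, $\nu_\omega$ is $\mathcal F_{-\infty}^0$-measurable.
\item Past/future independence then makes $\bbE[\nu_\omega]$ stationary, so $\bbE[\nu_\omega]=\rho$ by uniqueness of $\rho$.
\item Essential uniqueness of the Markov measure over $\rho$ now gives $\nu_\omega=\mu_\omega$ almost surely.
\end{itemize}
This removes $r_\omega$ and gives $m_0=1/N$ at once, so the $\rho^{(2)}$ and martingale detour is unnecessary.
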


The properties of the sample measures $\mu_\omega$ are closely related to the stability of the corresponding dynamics. In the case of a differentiable structure on $\calX$, stability can be studied via \emph{Lyapunov exponents}, which we introduce in the next section.

\subsection{Lyapunov exponents and discrete random attractor}

Let us now assume that $\calX$ is a compact Riemannian manifold with tangent bundle $T \calX$.
In the following, we write $\langle, \rangle_x$ for the metric on the tangent space $T_x \cal X$ at $x$ and  $\| \cdot \|_x$ for the corresponding norm, ignoring $x$ in the notation when it is clear from the context.
For a vector field $f: \calX \to T \calX$, we write $\rmD f$ for the covariant derivative, considered for fixed $x \in \calX$ as a linear operator $\rmD f(x) : T_x \calX \to T_x \calX$. 

Assuming the situation of Proposition~\ref{prop:SDE} with $k \geq 1$, we can then consider the extended variational process $(X_t, Y_t)$ on $T \cal X$ given by~\eqref{eq:sde} and
\begin{equation}
\label{eq:var_process}
\rmd Y_t = \rmD F_0(X_t) Y_t\,\rmd t + \sum_{j=1}^m \rmD F_j(X_t) Y_t \partial  W_t^j\,,  \ Y_0 \in T_x M\, .
\end{equation}

For all $X_0 \in \cal X$ and $Y_0 \neq 0$, we consider the characteristic exponent
\begin{align*}
\Lambda(X_0, Y_0) = \limsup_{t \to \infty} \frac{1}{t} \log  \|Y_t\|.
\end{align*}
It is by now classical (see, e.g., \cite[Section 2.1]{BlessingBlumenthalBredenEngel2025}) that if the Markov solution process of the SDE~\eqref{eq:sde} has a unique invariant measure $\rho$, then, by ergodicity, this characteristic exponent is independent from the inital conditions and we obtain the (maximal) \emph{Lyapunov exponent} $\Lambda$ as the almost sure limit
\begin{align}\label{eq:asymptoticLE2} 
\Lambda = \lim_{t \to \infty} \frac{1}{t} \log  \|Y_t\|
\end{align}
for almost all $(X_0, Y_0)$. The sign of this exponent helps to determine the asymptotic behavior of the random dynamical system associated to the SDE~\eqref{eq:sde}. Note that $\rho$ corresponds with a unique invariant Markov measure $\mu$ by Theorem~\ref{theo:Correspondence}.%

\begin{proposition}[Discrete random attractor%
]
\label{prop:discrete}
Assume that the $C^1$ RDS $(\theta, \varphi)$ is induced by an SDE~\eqref{eq:sde} with unique stationary measure $\rho$ and maximal Lyapunov exponent $\Lambda < 0$. Then the minimal weak random point attractor $A(\omega) = \operatorname{supp}(\mu_\omega)$ consists of finitely many $\mathcal F_{- \infty}^0$-measurable random points and the sample measures are given by Eq. \eqref{eq:discrete-mu}.
\end{proposition}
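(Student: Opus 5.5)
By Proposition~\ref{prop:discrete_cont}, the sample measures $\mu_\omega$ are either almost surely continuous or almost surely of the discrete form \eqref{eq:discrete-mu}. So it suffices to exclude the continuous alternative using $\Lambda<0$. Once that is done, Proposition~\ref{prop:ExWeakPntAttr} identifies $A(\omega)=\operatorname{supp}(\mu_\omega)$ as the minimal weak point attractor, and this support is then a finite set of $\mathcal F_{-\infty}^0$-measurable points.

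The first step is to turn $\Lambda<0$ into local contraction. Take $\rho\otimes\bbP$-typical initial data. The a.s. limit \eqref{eq:asymptoticLE2} combined with a local stable manifold theorem for $C^1$ (in fact $C^{1,\delta}$) RDS on the compact manifold $\calX$ (Ruelle's Pesin theory for RDS, as in Arnold's book) yields a random radius $r(\omega,x)>0$ with
\[
\varphi(t,\omega,y)\to\varphi(t,\omega,x)\quad\text{exponentially, for all } y\in B(x,r(\omega,x)).
\]
Since the maximal exponent is negative, the whole ball is stable, not just a submanifold. We choose $\delta>0$ with $\bbP\otimes\rho(r>\delta)>1/2$.

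The second step passes from this to atoms of $\mu_\omega$. Suppose, for contradiction, that $\mu_\omega$ is a.s. continuous. Invariance $\varphi(t,\omega,\cdot)^*\mu_\omega=\mu_{\theta_t\omega}$ and the fact that $\mu$ is the Markov measure with marginal $\rho$ together imply that $\mu_\omega$-typical points $x$ satisfy $r(\omega,x)>\delta$ with positive probability. Hence, with positive probability, $\mu_\omega(B(x,\delta))=:m>0$ for such an $x$. Pushing forward for time $t$, the mass $m$ collapses onto a ball of radius $\to0$ around $\varphi(t,\omega,x)$. Stationarity of $\theta$ then gives, along a subsequence $t_k\to\infty$,
\[
\bbP\Bigl(\sup_{z}\mu_{\omega}(B(z,\epsilon_k))\ge m\Bigr)\ge c>0 \quad\text{with } \epsilon_k\to0.
\]
By Fatou's lemma and monotonicity in $\epsilon$, $\sup_z\mu_\omega(\{z\})\ge m$ with positive probability, which contradicts continuity. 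Therefore $\mu_\omega$ is discrete, and the statement follows.

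The main obstacle is the uniformity in the second step. The stable radius $r$ and the contraction rate are only measurable and random, so the collapse estimate has to be run on a set of $(\omega,x)$ of positive measure and transported by $\theta_t$. An alternative I would try first is the cleaner route via the two-point motion: negative $\Lambda$ gives $\bbP(d(\varphi(t,\omega,x),\varphi(t,\omega,y))\to0)>0$ for nearby $x,y$, so $\mathbb E\,\mu_\omega\otimes\mu_\omega(\text{diagonal})>0$ by the Le Jan argument cited in \cite{LeJan1987}. This is essentially \cite[Proposition 2.19]{FGS2}, and I would invoke it together with the stable manifold theorem.
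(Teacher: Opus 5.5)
Your proposal is correct and follows essentially the paper's route. The stable manifold theorem turns $\Lambda<0$ into contraction of a whole neighbourhood; the paper cites \cite[Corollary 3.4]{FGS1} for this. Contraction together with invariance of $\mu_\omega$ and stationarity of $\theta$ then forces atoms in $\mu_\omega$: you prove this by hand, where the paper cites \cite[Lemma 2.19]{FGS1}. Le Jan's dichotomy and Proposition~\ref{prop:ExWeakPntAttr} finish the argument.

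Your two-point fallback is the paper's alternative justification via \cite[Proposition 2]{LeJan1987}. One point to state explicitly: the stable radius $r(\omega,x)$ depends only on the future of $\omega$. That is what allows independence of past and future to carry the bound $\mathbb P\otimes\rho(r>\delta)>1/2$ over to the Markov measure $\mu$.
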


\begin{proof}
    This can be seen from \cite[Lemma 2.19]{FGS1} in combination with \cite[Corollary 3.4]{FGS1}, where a local stable manifold theorem is used on $\mathbb R^n$ (directly transferrable to our compact situation). 

    The statement is also a direct corollary of \cite[Proposition 2]{LeJan1987}, as discussed in \cite{baxendale1991statistical}.
\end{proof}

\section{Main Results}
Our results are two-fold. In Section \ref{sec:main-distributional}, we characterize the distributional properties of the solutions of the RQF and a coupled system of two RQFs. In particular, we derive the correspondning Fokker-Planck equations and show that: 
\begin{itemize}
    \item RQF is a Brownian motion, see Theorem~\ref{th:rqf-one},
    \item coupled RQFs admit clustered invariant measures, see Theorem \ref{th:rqf-two}.
\end{itemize}

In Section \ref{sec:main-pw} we apply results from RDS theory introduced in Section \ref{sec:rds} to give an $\omega$-wise characterization of solutions of the RQF. In particular, in Theorem \ref{th:rqf-attractor} we show that the random attractor of the RQF almost surely consists of two anti-polar points and, hence, that the RQF exhibits a specific form of synchronizing behavior. 
\subsection{Invariant measures}
\label{sec:main-distributional}
For a general diffusion process $X_t$, its dynamical properties may be characterized by the infinitesimal generator:
\begin{definition}[Infinitesimal generator] Let $X_t$ be a Feller process on a Banach space~$\calX$. Then the operator $L: C_c^\infty(\calX) \to C_c^\infty(\calX)$ defined by
\[
(Lf)(x) := \lim_{t\to 0}\frac{1}{t} \bbE \left[f(X_t) - f(X_0)\big| X_0 = x\right]
\]
 is called the \emph{infinitesimal generator} of $X_t$.
\end{definition}
In particular, the evolution of the law of $X_t$ is given by the corresponding Fokker-Planck equation
\[
\partial_t f - L^*f =0,
\]
where $L^*$ is the (formal) $L^2$ adjoint of the generator $L$. The stationary measures, i.e., solutions of the stationary Fokker-Planck equation $L^*\rho = 0$, can be defined in terms of $L$:
\begin{definition}[Invariant measure]
A probability measure $\rho \in \calP(\calX)$ is an invariant measure of the diffusion process $X_t$ if  
\[
\int_{\calX} (L u)(x) \rm\rmd \rho(x) = 0
\]
holds for all $u\in C^\infty_c(\calX)$.
\end{definition}
We first show that RQF is generated by the Laplace-Beltrami operator on a sphere (with the natural topology) and thus is a Brownian motion. With a slight abuse of notation, we always identify the measure with its density with respect to the volume measure on $\bbS^{n-1}$ whenever the density exists.
\begin{theorem}[RQF is a Brownian motion]
\label{th:rqf-one}
Let $\rho_t = \law (X_t)$. Then $\rho_t: (0, \infty) \to C^\infty(\bbS^{n-1})$ is the unique (classical) solution of the rescaled heat equation, i.e.,
\[
\partial_t \rho_t - \frac{1}{2} \Delta \rho_t = 0, \quad \rho_t \stackrel{w}{\to} \law (X_0) \text{ as } t \downarrow 0,
\] 
where $\Delta$ is the Laplace-Beltrami operator on $\bbS^{n-1}$. In particular, the only invariant measure of the RQF is the uniform measure on the sphere $\bar \rho = \Gamma(\frac{n}{2})(2\pi^{n/2})^{-1} \vol_{\bbS^{n-1}}$, where $\Gamma$ denotes the gamma function.
\end{theorem}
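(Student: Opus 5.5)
The plan is to compute the generator of the RQF explicitly and show that it equals $\frac12\Delta$. The statement then reduces to standard facts about the heat equation on the compact manifold $\bbS^{n-1}$.

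\textbf{Step 1: rewrite the RQF as a Stratonovich SDE.} We write $Q_t=\sum_{i\le j} E_{ij} W^{ij}_t$ with independent scalar Brownian motions. The basis is $E_{ii}=\sqrt2\, e_ie_i^T$ and $E_{ij}=(e_ie_j^T+e_je_i^T)/\sqrt2$ for $i<j$, so that $Q_{ii}=\sqrt2 B^{ii}$ and $Q_{ij}=(B^{ij}+B^{ji})/\sqrt2$ have the right laws. Then \eqref{eq:rqf} takes the form \eqref{eq:sde} with $F_0=0$ and smooth tangent vector fields $F_{ij}(x)=-P_xE_{ij}x$. Proposition~\ref{prop:SDE} gives well-posedness and a smooth RDS. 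By H\"ormander's formula the generator is
\[
L=\tfrac12\sum_{i\le j}F_{ij}^2 ,
\]
where $F_{ij}$ acts as a derivation.

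\textbf{Step 2: identify $L=\frac12\Delta$.} This is the main computation and the main obstacle. The cleanest route is to exploit $O(n)$-invariance. The law of $Q_t$ is invariant under $Q\mapsto UQU^T$, and the vector fields transform equivariantly. Hence $L$ commutes with rotations, and $\sum F_{ij}F_{ij}^T$ is an invariant second-order symbol.

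As a cross-check, I would compute directly in ambient coordinates. For $f$ extended to $\bbR^n$ we have
\[
F_{ij}f(x)=-\langle \nabla f, E_{ij}x-(x^TE_{ij}x)x\rangle .
\]
The key identity is $\sum_{i\le j}(E_{ij})_{ab}(E_{ij})_{cd}=\delta_{ac}\delta_{bd}+\delta_{ad}\delta_{bc}$, which is the GOE covariance. It gives
\[
\sum_{i\le j} P_xE_{ij}x\otimes P_xE_{ij}x=P_x(|x|^2 I+xx^T)P_x=P_x \quad\text{on } \bbS^{n-1}.
\]
So the diffusion matrix is exactly the tangent projection. The remaining step is to verify that the first-order (Stratonovich correction) term $\frac12\sum_{i\le j}(\rmD F_{ij})F_{ij}$ equals $-\frac{n-1}{2}x$. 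This is the mean-curvature drift that makes $\frac12\sum_{ab}(P_x)_{ab}\partial_a\partial_b f-\frac{n-1}{2}x\cdot\nabla f$ equal to $\frac12\Delta f$. Using the same covariance identity, this amounts to contracting $-P_xE(P_xEx)+\dots$ over the basis, and I expect a direct calculation with $\sum E_{ij}^2=(n+1)I$ and the terms involving $x^TEx$ to produce exactly $-(n-1)x$ after projection. Alternatively, the drift is forced by rotation invariance to be a multiple of the radial vector $x$, which is normal to the sphere, and tangency of the process then pins it down.

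\textbf{Step 3: conclude.} With $L=\frac12\Delta$, the Fokker--Planck equation is $\partial_t\rho=\frac12\Delta\rho$, since $\Delta$ is self-adjoint. The law $\rho_t$ is a weak solution with initial datum $\law(X_0)$. Uniqueness of distributional solutions of the heat equation on a compact manifold, for example via the eigenfunction expansion or the heat kernel, shows that $\rho_t$ coincides with the heat semigroup applied to $\law(X_0)$. That solution is $C^\infty$ for $t>0$ and converges weakly as $t\downarrow0$.

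Finally, an invariant measure satisfies $\int \Delta u\,\rmd\rho=0$ for all smooth $u$. Expanding in spherical harmonics, every nonconstant eigenfunction of $\Delta$ has nonzero eigenvalue, so $\rho$ annihilates all nonconstant harmonics. Therefore $\rho$ is the normalized volume measure, with constant $\Gamma(n/2)/(2\pi^{n/2})$.
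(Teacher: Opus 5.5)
Your proof is correct once one normalization slip is fixed, and it identifies the generator by a different route than the paper.

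\textbf{The slip.} With $E_{ij}=(e_ie_j^T+e_je_i^T)/\sqrt2$ for $i<j$ you get $Q_{ij}=W^{ij}_t/\sqrt2$, of variance $t/2$. But $(B^{ij}+B^{ji})/\sqrt2$ has variance $t$, so the off-diagonal elements must be $E_{ij}=e_ie_j^T+e_je_i^T$. With your basis as written, the covariance identity $\sum_{i\le j}(E_{ij})_{ab}(E_{ij})_{cd}=\delta_{ac}\delta_{bd}+\delta_{ad}\delta_{bc}$ fails off the diagonal (it gives $1/2$ instead of $1$), and $\sum E_{ij}^2=\frac{n+3}{2}I$ rather than $(n+1)I$. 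So Step 2 uses the right identities, but they belong to the corrected basis.

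\textbf{The drift.} Your direct computation closes, and no projection is involved (projecting $-(n-1)x$ would give zero). For $F=-P_xEx$ one has $(\rmD F)F=E^2x-2(x^TEx)Ex-2(x^TE^2x)x+3(x^TEx)^2x$. Summing with $\sum E^2=(n+1)I$, $\sum (x^TEx)Ex=2x$, $\sum x^TE^2x=n+1$ and $\sum (x^TEx)^2=2$ gives $(1-n)x$. This is exactly the It\^o drift $-\frac{n-1}{2}x$ found in Lemma~\ref{lem:zt}. Your symmetry shortcut is also complete. Equivariance forces the drift to be $c\,x$, and since the fields are tangent, $L(|x|^2)=\tr P_x+2c=n-1+2c$ must vanish on the sphere.

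\textbf{Comparison with the paper.} The paper does not compute the diffusion matrix or drift in this proof. It writes the ambient Fokker--Planck operator of the RQF and splits it into two parts. The ``diagonal'' part is shown to equal the operator of the reference Brownian motion $\rmd W_t=P_{W_t}\partial B_t$; the ``cross'' part is shown to vanish. Both steps use only $P_x^2=P_x$, $P_xx=0$ and $|x|^2=1$, and the heat-equation conclusions are then inherited from $W_t$. That route needs neither the extrinsic formula for $\Delta_{\bbS^{n-1}}$ nor a separate drift computation.

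Your route reads off the diffusion matrix $P_x$ in one line from the GOE covariance, pins down the drift, and matches with $\Delta_{\bbS^{n-1}}f=\tr(P_x\nabla^2 f)-(n-1)x\cdot\nabla f$. It buys three things:
\begin{itemize}
\item It explains why the answer must be a multiple of $\Delta$: the increments are orthogonally invariant. The same argument shows any $O(n)$-invariant Gaussian driving matrix gives a time-changed spherical Brownian motion.
\item It works with $L$ acting on functions, where applying sphere identities after differentiation is legitimate because the fields are tangent. The paper instead applies $P_x^2=P_x$ and $|x|^2=1$ inside ambient divergences, which needs more care.
\item Your Step 3 supplies the uniqueness and invariant-measure arguments (via spherical harmonics) that the paper leaves to standard facts.
\end{itemize}
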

\begin{proof}
The proof is straightforward and is based on the comparison with the conventional definition of Brownian motion on a sphere. In particular, consider the process defined by the Stratonovich SDE
\begin{equation}
\label{eq:bm}
\rmd W_t = P_{W_t} \partial B_t,
\end{equation}
where $B_t$ is an $n$-dimensional Wiener process. The generator of $W_t$ is $\frac{1}{2}\Delta$ and, since $\Delta$ is essentially self-adjoint on $\bbS^{n-1}$, the statement of the Theorem holds for $W_t$. We thus only need to show that $\law(X_t)$ solves the Fokker-Planck equation defined by $W_t$.

     Rewriting \eqref{eq:bm} in Euclidean coordinates we get
     \begin{equation}
         \label{eq:bm-euc}
    \rmd W^i_t = \sum_{j} \hat V^i_{j} (W_t) \partial B^j, \quad \text{where} \quad \hat V^i_{j} (W_t) = P_{W_t}^{ij} =  \delta_{i, j} - W_t^i W_t^j.
     \end{equation}
     Using \cite[Section 4.3.6]{gardiner2004handbook}, we conclude that, with this notation, the corresponding Fokker-Planck equation takes the  form 
     \[
     \partial_t f = L^* f = \frac{1}{2}\sum_{i, j, l} \partial_i\left(\hat V^i_{j} \partial_l \left(\hat V^l_{j} f\right)\right).
     \]

     The analogous reformulation of the RQF process
     \eqref{eq:rqf} reads as
    \begin{equation}
    \label{eq:brownian-1}
    \rmd X^i_t = -\sum_{j, k} V^i_{j, k} \partial Q^{j, k} = -\frac{1}{\sqrt 2}  \sum_{j, k} (V^i_{j, k} + V^i_{k, j}) \partial B^{j, k} , \quad \text{where} \quad V^i_{j, k} = (\delta_{i, j} - X_t^i X_t^j)X^k_t,
   \end{equation}
    implying that the Fokker-Planck equation corresponding to the RQF takes the form 
    \begin{equation}
    \label{eq:brownian-2}
    \partial_t f = L^*_X f = \frac{1}{2}\sum_{i, j, k, l} \partial_i\left( V^i_{j, k} \partial_l \left(V^l_{j, k} f\right)\right) + \frac{1}{2}\sum_{i, j, k, l} \partial_i\left( V^i_{j, k} \partial_l \left(V^l_{k, j} f\right)\right) =: L^d f + L^c f,
    \end{equation}
    where $L^d$ stands for the diagonal and $L^c$ for the cross-diagonal part.
    We will show that $L^*_X =L^d = L^* $. 
    
    Plugging the expression for $V^i_{j, k}$ from \eqref{eq:brownian-1} into \eqref{eq:brownian-2} for the diagonal part $L^d $ we calculate for arbitrary $\rho \in C^2(\bbS^{n-1})$ and $x = (x^1, \dots, x^{n}) \in \bbS^{n-1}$:
    \begin{align*}
        (L^d \rho)(x) &= \frac12\sum_{i, j, k, l}\partial_i\left( \hat V^i_{j}x^k \partial_l \left(\hat V^l_{j}x^k \rho\right)\right) \\
        &= \frac12\sum_{i, j, k, l} \partial_i\left( \hat V^i_{j}\left(x^k\right)^2\partial_l \left(\hat V^l_{j} \rho\right)\right)  + \partial_i\left( \hat V^i_{j}x^j \hat V^l_{j} \rho \partial_l x^k\right) \\
        &= \frac12\sum_{i, j, l} \partial_i\left(\hat V^i_{j} \partial_l \left(\hat V^l_{j} \rho\right)\right)  + \frac12\sum_{i, j, k, l} \partial_i\left( \delta_{l, k}\hat  V^i_{j}x^k \hat V^l_{j} \rho \right) \\
        &= (L^* \rho)(x) + \sum_{i, j, l} \partial_i\left( \hat  V^i_{j}x^l (\delta_{j, l} - x^lx^j) \rho \right) \\
        &= (L^* \rho)(x) + \sum_{i, j} \partial_i\left( \hat  V^i_{j} \left(\sum_l \delta_{j, l} x^l  - \sum_l (x^l)^2 x^j\right) \rho \right) \\
        &=(L^* \rho)(x) + \sum_{i, j} \partial_i\left( \hat  V^i_{j} \left(x^j  -  x^j\right) \rho \right) = (L^* \rho)(x),
    \end{align*}
    and for the cross term $L^c $:
    \begin{align*}
        (L^c \rho)(x) &= \frac12\sum_{i, j, k, l}\partial_i\left( \hat V^i_{j}x^k \partial_l \left(\hat V^l_{k}x^j \rho\right)\right) \\
        &= \frac12\sum_{i, j, k, l} \partial_i\left( \hat V^i_{j} x^kx^j\partial_l \left(\hat V^l_{k} \rho\right)\right)  + \partial_i\left( \hat V^i_{j}x^k \hat V^l_{k} \rho\partial_l x^j\right) \\
        &= \frac12\sum_{i, k, l} \partial_i\left(x^k \partial_l \left(\hat V^l_{k} \rho\right) \sum_j\hat V^i_{j} x^j\right)  + \frac12\sum_{i, j, k, l} \partial_i\left( \delta_{l, j}\hat  V^i_{j}x^k \hat V^l_{k} \rho \right) \\
        &= 0 +\frac12\sum_{i, k, l} \partial_i\left( \hat  V^i_{l}x^k \hat V^l_{k} \rho \right) = %
         0,
    \end{align*}
    where we used the properties of the projection matrix from Lemma \ref{lemma:prop-proj} below and the fact that $\sum (x^i)^2 = 1$ on $\bbS^{n-1}$. Thus, we have $L^*_X = L^* = \frac{1}{2}\Delta $ and the result follows.
\end{proof}
\begin{lemma}[Properties of the diffusion matrix] 
\label{lemma:prop-proj}
For any $x \in \bbS^{n-1}$ let $\hat V^i_{j}$ be the elements of the projection matrix as in Eq. \eqref{eq:bm-euc}, then the following holds for all $i, k = 1\dots n$:
\begin{itemize}
    \item $P^2 = P$:
    \[
    \hat V^i_{k} = \sum_l \left(\hat V^i_{l} \hat V^l_{k} \right) 
    \]
\item $Px = 0$:
    \[
    \sum_{ k}  \hat  V^i_{k}  x^k =0. 
    \]
    \end{itemize}
\end{lemma}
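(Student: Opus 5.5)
Both identities follow from writing the projection entrywise, $\hat V^i_{k} = \delta_{i,k} - x^i x^k$, and using only $\sum_l (x^l)^2 = 1$ on $\bbS^{n-1}$. I would check each one by expanding the sum in coordinates, which is the same as verifying the matrix identities $P_x^2 = P_x$ and $P_x x = 0$ for $P_x = I - xx^T$.

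For the first identity, expand
\[
\sum_l \hat V^i_{l}\hat V^l_{k} = \sum_l (\delta_{i,l} - x^i x^l)(\delta_{l,k} - x^l x^k) = \delta_{i,k} - x^i x^k - x^i x^k + x^i x^k \sum_l (x^l)^2 .
\]
Since $\sum_l (x^l)^2 = 1$ on the sphere, the last term equals $x^i x^k$, and the right-hand side reduces to $\delta_{i,k} - x^i x^k = \hat V^i_{k}$. In matrix language this is $(I - xx^T)^2 = I - 2xx^T + x(x^Tx)x^T = I - xx^T$.

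For the second identity, compute
\[
\sum_k \hat V^i_{k} x^k = \sum_k (\delta_{i,k} - x^i x^k) x^k = x^i - x^i \sum_k (x^k)^2 = 0 ,
\]
again using $|x| = 1$. Equivalently, $P_x x = x - x(x^Tx) = 0$.

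There is no real obstacle here; the only point to watch is that both identities use the constraint $|x| = 1$. Consequently they hold pointwise on $\bbS^{n-1}$ but not as identities in an ambient neighbourhood of the sphere. This matters where the lemma is used in the proof of Theorem~\ref{th:rqf-one}: there, derivatives $\partial_l$ act on the $\hat V$'s. The relations may only be substituted inside expressions after differentiation, or in factors that are not differentiated, as is done in that proof. I would add a remark to this effect.
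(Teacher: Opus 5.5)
Your verification is correct and is the same argument as the paper's, which simply states that both identities follow from the definition $P_x = I - xx^T$ together with $|x| = 1$; you have written out that expansion. One correction to your side remark: the proof of Theorem~\ref{th:rqf-one} does not confine these identities to undifferentiated factors (in the computation of $L^c$ the relation $\sum_j \hat V^i_j x^j = 0$ is substituted inside the outer $\partial_i$), and that step is legitimate only if the divergences there are read intrinsically on $\bbS^{n-1}$, where all the fields involved are tangent, not as ambient Euclidean derivatives.
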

\begin{proof}
    This follows directly from the definition of the projection matrix on $\bbS^{n-1}$.
\end{proof}
\paragraph{Two point process.} We now consider the dynamics of two RQF processes driven by the same noise process $Q_t$,
\begin{equation}
\begin{split}
\label{eq:rqf-two}
    \rmd X_t  
    &= -P_{X_t}  \partial Q_t  X_t, \quad X_0 \in \bbS^{n-1}, \\
    \rmd Y_t 
    &= -P_{Y_t}  \partial Q_t  Y_t, \quad \ \  Y_0 \in \bbS^{n-1}.
\end{split}
\end{equation}
While both $X_t$ and $Y_t$ are Brownian motions and thus (by the ergodic theorem) become uniformly distributed on the sphere in the large time limit, their mutual configuration behaves non-trivially due to the coupling by the noisy process.

In this section we study invariant measures $\rho \in \calP(\bbS^{n-1} \times \bbS^{n-1})$ of the coupled process~\eqref{eq:rqf-two}. By Theorem \ref{th:rqf-one}, both marginals of any stationary distribution are necessarily equal to the uniform measure $\bar \rho$. At the same time, as will be shown in Corollary~\ref{th:rqf-two}, both the polar $X_t = Y_t$ and the anti-polar configuration $X_t = -Y_t$ become invariant under the dynamics due to the common noise and thus the system admits a whole family of non-trivial (clustered) invariant distributions. %

Such a behavior stems from  the underlying symmetry and holds for a larger class of stochastic processes:
\begin{theorem}[Invariant measures of a two-point process]
\label{th:main-sym}
Let $X_t$ be a stochastic process on $\bbS^{n-1}$ of the form
\begin{equation}
\label{eq:sym}
\rmd X_t = F_0(X_t)\rmd t + \sum_{i=1}^k F_i(X_t) \partial B^i_t, \qquad X_0 \in \bbS^{n-1}
\end{equation}
where $B^i_t$ are independent Brownian motions in $\bbR$ and $F_i: \bbS^{n-1} \to  T\bbS^{n-1} \subseteq \bbR^n$ are tangent vector fields on the sphere satisfying $F_i(-X_t) = -F_i(X_t)$ in Euclidean coordinates. Then, any invariant measure $\hat\rho \in \calP(\bbS^{n-1})$ of the process $X_t$ generates a  family $(\rho_\alpha)_{\alpha \in [0,1]} \subset \calP(\bbS^{n-1} \times \bbS^{n-1})$ given by
\[
\rho_\alpha(\rmd x, \rmd y) = \hat \rho(\rmd x) \times \left(\alpha \delta_x(\rmd y) + (1-\alpha)\delta_{-x}(\rmd y)\right), \quad \alpha \in [0,1],
\]
such that every $\rho_\alpha$ is an invariant measure of the coupled two point process
\begin{equation}
\begin{split}
\label{eq:rqf-two_general}
    \rmd X_t  
    &= F_0(X_t)\rmd t + \sum F_i(X_t) \partial B^i_t, \quad X_0 \in \bbS^{n-1},\\
    \rmd Y_t 
    &= F_0(Y_t)\rmd t + \sum F_i(Y_t) \partial B^i_t, \quad \ \  Y_0 \in \bbS^{n-1}.
\end{split}
\end{equation}
\end{theorem}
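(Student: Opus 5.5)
The argument is pathwise: invariance of the diagonal and the anti-diagonal under the coupled flow, plus invariance of $\hat\rho$ under the one-point motion. The claim is linear in $\alpha$, and the set of invariant measures of the two-point process is convex. It therefore suffices to treat the two extreme cases, $\rho_1(\rmd x,\rmd y)=\hat\rho(\rmd x)\delta_x(\rmd y)$ and $\rho_0(\rmd x,\rmd y)=\hat\rho(\rmd x)\delta_{-x}(\rmd y)$. Each is the pushforward of $\hat\rho$ under the map $\iota_\pm(x)=(x,\pm x)$ from $\bbS^{n-1}$ to $\bbS^{n-1}\times\bbS^{n-1}$.

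\textbf{Step 1 (invariance of the diagonal and anti-diagonal).} Let $X_t$ solve \eqref{eq:sym} with $X_0=x$, and set $Y_t:=\pm X_t$. For the plus sign it is immediate that $Y_t$ solves the $Y$-equation of \eqref{eq:rqf-two_general} with $Y_0=x$. For the minus sign, oddness gives $F_i(-X_t)=-F_i(X_t)$ for $i=0,\dots,k$, so
\[
\rmd(-X_t)=-F_0(X_t)\rmd t-\sum_i F_i(X_t)\partial B^i_t = F_0(-X_t)\rmd t+\sum_i F_i(-X_t)\partial B^i_t .
\]
The Stratonovich integral is linear in the integrand, so no correction terms appear. (Note: oddness of the drift $F_0$ is needed here too; I read the hypothesis as covering $i=0$.) The fields are smooth, so Proposition~\ref{prop:SDE} gives pathwise uniqueness. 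Hence the solution of \eqref{eq:rqf-two_general} started from $(x,\pm x)$ is exactly $(X_t,\pm X_t)$, i.e.\ $\varphi^{(2)}(t,\omega,\iota_\pm(x))=\iota_\pm(\varphi(t,\omega,x))$.

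\textbf{Step 2 (transfer of invariance).} Let $\hat P_t$ and $\hat P^{(2)}_t$ be the transition functions \eqref{eq:transition_RDS} of the one- and two-point motions. Step 1 gives $\hat P^{(2)}_t(\iota_\pm(x),\cdot)=\iota_\pm{}_*\hat P_t(x,\cdot)$. Fix a bounded measurable $g$ on the product space. Then
\[
\int \hat P^{(2)}_t g\,\rmd(\iota_\pm{}_*\hat\rho)=\int \hat P_t(g\circ\iota_\pm)\,\rmd\hat\rho=\int g\circ\iota_\pm\,\rmd\hat\rho=\int g\,\rmd(\iota_\pm{}_*\hat\rho),
\]
where the middle equality uses invariance of $\hat\rho$. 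Taking $\alpha$-convex combinations finishes the proof.

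The paper defines invariance through the generator. Equivalently, for $u\in C^\infty$ on the product one checks $(L^{(2)}u)\circ\iota_\pm = L(u\circ\iota_\pm)$, which follows from Itô's formula applied to $u(X_t,\pm X_t)$. This needs a short argument that semigroup-invariance and generator-invariance agree on the compact manifold, for instance because $C^\infty$ is a core for the Feller generator.

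\textbf{Main obstacle.} The main obstacle is bookkeeping: justifying uniqueness for the coupled system so that the diagonal solution is \emph{the} solution, and reconciling the generator-based definition of invariance with the pathwise argument. Both are routine given Proposition~\ref{prop:SDE}.
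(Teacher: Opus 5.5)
Your proof is correct, but it takes a different route from the paper's. The paper stays at the level of generators. It splits $\int L_{XY}u\,\rmd\rho_\alpha$ into a diagonal term and an anti-diagonal term. The diagonal term is handled exactly as you would: a two-point motion started at $(x,x)$ stays on the diagonal, so $(L_{XY}u)\circ\iota_+=L_X(u\circ\iota_+)$. The anti-diagonal term is handled by an explicit computation with the It\^o form of the two-point generator from Lemma~\ref{lem:generator-xy}. That computation uses the parities $G(-x)=-G(x)$, $D(-x)=D(x)$ and $D^{XY}(x,-x)=-D(x)$ to recombine first- and second-order terms into $L_X$ applied to $x\mapsto u(x,-x)$.

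You instead treat both cases with one pathwise observation. Oddness plus pathwise uniqueness give $\varphi(t,\omega,-x)=-\varphi(t,\omega,x)$, so $\iota_\pm$ intertwines the one- and two-point motions. This buys several things:
\begin{itemize}
\item It avoids the second-order bookkeeping.
\item It makes explicit that oddness of $F_0$ is required. The paper's step $G(-x)=-G(x)$ uses this silently, so your reading of the hypothesis as covering $i=0$ is the right one.
\item It extends verbatim to any diffeomorphism $g$ with $F_i(g(x))=\rmD g(x)F_i(x)$ for all $i$.
\item It is the same symmetry the paper later invokes in the proof of Theorem~\ref{th:rqf-attractor}.
\end{itemize}

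The cost you flag, reconciling semigroup and generator invariance, can be dropped entirely. Once $Y_t=\pm X_t$ pathwise, the limit definition of the generator gives $(L_{XY}u)\circ\iota_\pm=L_X(u\circ\iota_\pm)$ directly. Since $u\circ\iota_\pm$ is a smooth test function on $\bbS^{n-1}$, $\int L_{XY}u\,\rmd\rho_\alpha=0$ then follows from the paper's generator-based definition of invariance, with no core argument. In effect, your proof applies the paper's treatment of the diagonal term to the anti-diagonal term as well.
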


Applying the result to the RQF model \eqref{eq:rqf-two} we conclude that:
\begin{corollary}[Invariant measures of coupled RQFs]
\label{th:rqf-two}
Every measure $\rho_\alpha \in \calP(\bbS^{n-1} \times \bbS^{n-1})$ of the form
\[
\rho_\alpha(\rmd x, \rmd y) = \bar \rho(\rmd x) \times \left(\alpha \delta_x(\rmd y) + (1-\alpha)\delta_{-x}(\rmd y)\right), \quad \alpha \in [0,1]
\]
is an invariant measure of the coupled process \eqref{eq:rqf-two}.
\end{corollary}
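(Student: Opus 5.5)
The plan is to derive Corollary~\ref{th:rqf-two} directly from Theorem~\ref{th:main-sym}, with the single-point invariant measure $\hat\rho$ taken to be the uniform measure $\bar\rho$ given by Theorem~\ref{th:rqf-one}. What remains is to check that the RQF \eqref{eq:rqf} can be written in the form \eqref{eq:sym} with odd tangent vector fields, and that the coupled system \eqref{eq:rqf-two} is then exactly the two-point process \eqref{eq:rqf-two_general}.

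\textbf{Step 1: rewriting the RQF in the form \eqref{eq:sym}.} As in the proof of Theorem~\ref{th:rqf-one}, expand $\partial Q_t = \frac{1}{\sqrt2}\sum_{j,k}(E_{jk}+E_{kj})\,\partial B^{jk}_t$. Here $E_{jk}$ are the matrix units and the $B^{jk}$ are the $n^2$ independent Brownian motions. This gives
\[
\rmd X_t = \sum_{j,k} F_{jk}(X_t)\,\partial B^{jk}_t, \qquad F_{jk}(x) := -\tfrac{1}{\sqrt2}\, P_x (E_{jk}+E_{kj})\, x ,
\]
with $F_0\equiv 0$. Each $F_{jk}$ is tangent to the sphere because $P_x$ projects onto $T_x\bbS^{n-1}$, and it is smooth, since it is the restriction of a polynomial vector field. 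It is also odd: $P_{-x}=I-xx^T=P_x$, so $F_{jk}(-x)=-P_x(E_{jk}+E_{kj})x=-F_{jk}(x)$. Smoothness also ensures that Proposition~\ref{prop:SDE} applies, so both the one-point and the two-point systems are well posed.

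\textbf{Step 2: identifying the coupled system.} In \eqref{eq:rqf-two}, $X_t$ and $Y_t$ are driven by the same process $Q_t$, and hence by the same family $(B^{jk})$. The system is therefore precisely \eqref{eq:rqf-two_general} with the vector fields from Step~1.

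\textbf{Step 3: applying the theorem.} By Theorem~\ref{th:rqf-one}, $\bar\rho$ is an invariant measure of $X_t$. Theorem~\ref{th:main-sym} with $\hat\rho=\bar\rho$ then shows that every $\rho_\alpha$ is invariant for \eqref{eq:rqf-two}, which is the claim.

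I do not expect a genuine obstacle here; the only point needing care is the bookkeeping in Step~1, namely writing the matrix-valued Stratonovich integrator as a finite sum of scalar Brownian drivers and checking that oddness survives. If one wanted the argument to be independent of Theorem~\ref{th:main-sym}, the underlying mechanism is short. By uniqueness of strong solutions, started on the diagonal $\{y=x\}$ the process satisfies $Y_t=X_t$. Started on the antidiagonal $\{y=-x\}$, oddness of the $F_{jk}$ (and of $F_0\equiv 0$) shows that $-X_t$ solves the $Y$-equation, so $Y_t=-X_t$. Consequently, if $(X_0,Y_0)\sim\rho_\alpha$, then $X_t\sim\bar\rho$, and $Y_t$ equals $X_t$ or $-X_t$ according to an independent choice with probabilities $\alpha$ and $1-\alpha$. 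This is exactly $\rho_\alpha$ again.
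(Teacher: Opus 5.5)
Your proposal is correct and matches the paper's proof: the paper likewise uses $P_{-x}=P_x$ and the coordinate representation \eqref{eq:brownian-1} to verify the oddness hypothesis of Theorem~\ref{th:main-sym} (with $F_0\equiv 0$), then takes $\hat\rho=\bar\rho$ from Theorem~\ref{th:rqf-one}. Your closing pathwise argument is also valid and gives a self-contained alternative that bypasses the generator computation behind Theorem~\ref{th:main-sym}: pathwise uniqueness forces $Y_t=X_t$ or $Y_t=-X_t$ on the diagonal or antidiagonal, and the sign is independent of $X_0$ and of the noise.
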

\begin{proof}
    Using the fact that $P_{X} = P_{-X}$ and the standard SDE representation of the RQF~\eqref{eq:brownian-1}, we conclude that the assumptions of Theorem \ref{th:main-sym} are satisfied. By Theorem \ref{th:rqf-one} we get the result.
\end{proof}

\begin{proof}[Proof of Theorem \ref{th:main-sym}]
    By definition we need to show that for arbitrary $u \in C^2(\bbS^{n-1}\times \bbS^{n-1})$ and any $\alpha \in [0,1]$ the measure $\rho_\alpha$ satisfies
    \[
    \int_{\bbS^{n-1} \times \bbS^{n-1}} (L_{XY} u) \rho_\alpha(\rmd x, \rmd y) = 0,
    \]
    where $L_{XY}$ is the generator of the two-point process.
    Using the structure of $\rho_\alpha$, we can be split the above integral into two parts:
    \[
    \begin{aligned}
    \MoveEqLeft \int_{\bbS^{n-1} \times \bbS^{n-1}}(L_{XY} u) \rho_\alpha(\rmd x, \rmd y) \\
    &= \alpha\int_{\bbS^{n-1}}(L_{XY} u)\big|_{y = x}\hat\rho(\rmd x) +(1-\alpha)\int_{\bbS^{n-1}}(L_{XY} u)\big|_{y = -x}\hat\rho(\rmd x) \\
    &=: \alpha I + (1-\alpha)II.
     \end{aligned}
    \]
    First note that $I = 0$ is trivially satisfied by an arbitrary pair of coupled diffusion processes since, by definition:
    \[
    (L_{XY}f)(x, y) \Big|_{y= x}= \lim_{t\to 0}\frac{1}{t} \bbE \left[f(X_t, Y_t) - f(X_0, Y_0)\big| X_0 = Y_0 = x\right] = (L_{X}f)(x, x),
    \]
    and thus for any invariant measure $\hat \rho$ of the process $X_t$ we get
    \[
    \int (L_{XY}f)(x, y) \Big|_{y= x}\hat\rho{\rmd x} = \int (L_{X}f)(x, x)\hat\rho{\rmd x} = 0.
    \]
    For the term $II$, using Lemma \ref{lem:generator-xy} below we analogously obtain 
   \[
   \begin{aligned}
   II &= \int \left( (\nabla_x u(x, y))^T G(x) + (\nabla_y u(x, y))^T G(y) \right)\big|_{y = -x} \hat\rho(\rmd x)  \\
   &\qquad + \int \left(  \tr\left(\nabla_x^2 u(x, y) D^X\right) + \tr\left(\nabla_y^2 u(x, y) D^Y\right)\right)\big|_{y = -x} \hat\rho(\rmd x) \\
   &\qquad +  2\int \tr\left(\nabla_x\nabla_y u(x, y) D^{XY}\right)\big|_{y = -x} \hat\rho(\rmd x)\\
   &=\int \left( \nabla_x u(x, -x)^T G(x)  + \tr\left(\nabla_{x}^2 u(x, -x) D^X\right)\right)\big|_{y = -x} \hat\rho(\rmd x) \\
   &= 2\int (L_X u(x, -x))\hat\rho(\rmd x) = 0,
   \end{aligned}
   \]
   where we used that $D(-x) =D(x)$, implying that
   \[
   G(y)\Big|_{y = -x} = F_0(-x) + \frac{1}{2}\nabla D(x) = -G(x)
   \]
   and thus
   \[
   \begin{aligned}
   \nabla_x u(x, -x)^T G(x)&= \left(\nabla_x u(x, y) - \nabla_y u(x, y) \right)^T G(x)\big|_{y = -x} \\
   &= \nabla_x u(x, y)^T G(x)\big|_{y = -x} + \nabla_y u(x, y) G(y)\big|_{y = -x}. 
   \end{aligned}
   \]
   Analogously, using that $D^{XY}\big|_{x=-y} = - D^X$ and $D^Y\big|_{x=-y} = D^X$, we obtain
   \[
   \begin{aligned}
   \nabla_{x}^2 u(x, -x) D^X\big|_{y = -x} &= \left(\nabla_{x}^2 u(x, y)  + \nabla_{y}^2 u(x, y) - 2\nabla_x\nabla_{y} u(x, y)\right) D^X\big|_{y = -x}\\
   &=\nabla_{x}^2 u(x, y) D^X\big|_{y = -x} + \nabla_{y}^2 u(x, y)D^Y\big|_{y = -x} + 2\nabla_x\nabla_{y} u(x, y) D^{XY}\big|_{y = -x},
   \end{aligned}
   \]
   and, hence, the result.
\end{proof}

\begin{lemma}[Generator of the two-point process]
\label{lem:generator-xy}
    Let $(X_t, Y_t)$ be the two-point process of an SDE as in Theorem \ref{th:main-sym}, then for any $u\in C^2(\bbS^{n-1}\times \bbS^{n-1})$ the following holds:
    \[
    (L_{XY}u)(x, y) %
    = (L_{X}u)(x, y) + (L_{Y}u)(x, y) + (L^{XY}u)(x, y),
    \]
    where the cross-term $L^{XY}u$ is given as
    $$ L^{XY}u = 2\tr\left(\nabla_x\nabla_y u D^{XY}\right), \quad D^{XY}(x, y) = \frac12\sum_{i} F_i(x) F_i(y)^T.$$

\end{lemma}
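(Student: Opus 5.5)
The plan is to view the two-point motion $(X_t,Y_t)$ as a single Stratonovich SDE on the product manifold $\bbS^{n-1}\times\bbS^{n-1}$ and to compute its generator with the standard Stratonovich formula. The joint system \eqref{eq:rqf-two_general} has drift $(F_0(x),F_0(y))$ and noise vector fields $\tilde F_i(x,y)=(F_i(x),F_i(y))$, each driven by the same $B^i_t$. The Stratonovich generator is then
\[
L_{XY}u=\tilde F_0 u+\frac12\sum_{i=1}^k \tilde F_i(\tilde F_i u),
\]
where each vector field acts as a derivation. The same formula for the one-point process gives $L_X u=F_0\cdot\nabla_x u+\frac12\sum_i F_i\cdot\nabla_x(F_i\cdot\nabla_x u)$ acting in the $x$-variable, and analogously $L_Y u$ acting in $y$. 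Here $L_X u(x,y)$ means the one-point generator applied to $u(\cdot,y)$ with $y$ frozen.

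Next I expand $\tilde F_i(\tilde F_i u)$ with $\tilde F_i=F_i(x)\cdot\nabla_x+F_i(y)\cdot\nabla_y$. The two "pure" pieces, $F_i(x)\cdot\nabla_x(F_i(x)\cdot\nabla_x u)$ and the analogous one in $y$, reassemble exactly into $L_Xu+L_Yu$ once the drift $F_0$ is added in each variable. The two mixed pieces are $F_i(x)\cdot\nabla_x(F_i(y)\cdot\nabla_y u)$ and $F_i(y)\cdot\nabla_y(F_i(x)\cdot\nabla_x u)$. Since $F_i(y)$ does not depend on $x$ and $F_i(x)$ does not depend on $y$, neither produces a first-order term, and both equal $F_i(x)^T\nabla_x\nabla_y u\,F_i(y)$. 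Summing over $i$ with the prefactor $\frac12$ gives
\[
\sum_i F_i(x)^T\nabla_x\nabla_y u\,F_i(y)=\tr\Big(\nabla_x\nabla_y u\sum_i F_i(y)F_i(x)^T\Big)=2\tr\big(\nabla_x\nabla_y u\,D^{XY}\big),
\]
up to the transposition convention for the mixed Hessian, and this is the claimed cross term. To match the paper's notation for $D^X$ and $G$ used in the proof of Theorem \ref{th:main-sym}, I will also rewrite the pure parts in Itô form: $D^X=\frac12\sum_iF_iF_i^T$ and $G=F_0+\frac12\sum_i(\rmD F_i)F_i$. This is a routine check.

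The main subtlety is working in ambient Euclidean coordinates while $u$ is only defined on the product of spheres. I plan to extend $u$ arbitrarily to a neighbourhood in $\bbR^n\times\bbR^n$. Because all the $F_i$ are tangent, the derivations $F_i\cdot\nabla$ and their iterates only see $u$ restricted to the manifold, so the result is independent of the extension. For the same reason the mixed term involves only tangential components of $\nabla_x\nabla_y u$. The remaining point to verify is the index convention, namely that $\tr(\nabla_x\nabla_y u\,D^{XY})$ is read with $(\nabla_x\nabla_y u)_{ab}=\partial_{x_a}\partial_{y_b}u$, so that it pairs correctly with $F_i(x)F_i(y)^T$. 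If needed, I will transpose to match the usage in the proof of Theorem \ref{th:main-sym}.
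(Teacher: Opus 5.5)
Your proposal is correct and is essentially the paper's argument: both compute the generator of the joint diffusion driven by the common noise and read off $L^{XY}$ from the mixed second-order terms. You do this via the Stratonovich sum-of-squares form $\tilde F_0+\frac12\sum_i\tilde F_i^2$, which handles the extension issue cleanly and yields the standard It\^o drift $G=F_0+\frac12\sum_i(\rmD F_i)F_i$; the paper uses the It\^o form and the covariation $\rmd[X^a,Y^b]=\sum_iF_i^a(X)F_i^b(Y)\,\rmd t$.

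On the index convention you flag: the identity requires $(\nabla_x\nabla_y u)_{ab}=\partial_{y_a}\partial_{x_b}u$, which is the transpose of the reading you tentatively propose. Equivalently, in your reading the cross term is $2\tr((\nabla_x\nabla_y u)^T D^{XY})$. This is immaterial for Theorem~\ref{th:main-sym}, because $D^{XY}|_{y=-x}=-D(x)$ is symmetric.
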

\begin{proof}
    Rewriting \eqref{eq:sym} in the Ito form we obtain
    \begin{align*}
    \rmd X_t &=F_0(X_t)\rmd t - \frac{1}{2}\nabla_x D(X_t) \rmd t + \sum F_i(X_t)\rmd B_t^i, \\
    D(X_t) &= \frac{1}{2} \sum_{i} F_i(X_t) F_i(X_t)^T,
    \end{align*}
    where $\nabla$ is the Euclidean gradient. Then a standard calculation for the coupled process yields
    \begin{align*}
        (L_{XY}u)(x, y) &= (\nabla_x u)^T G(x) + (\nabla_y u)^T G(y) + \tr\left(\nabla_x^2 u(x, y) D^X\right) + \tr\left(\nabla_y^2 u(x, y) D^Y\right) \notag \\
        &\qquad +\tr\left(\nabla_x\nabla_y u(x, y) D^{XY}\right) + \tr\left(\nabla_y\nabla_x u(x, y) D^{YX}\right)\notag \\
        &=(L_Xu)(x, y) + (L_Yu)(x, y) + 2\tr\left(\nabla_x\nabla_y u(x, y) D^{XY}\right), 
    \end{align*}
    where $G(x) = F_0(x)- \frac{1}{2}\nabla D(x)$ is the drift term and the covariance matrices satisfy 
    \[
    D^{XY}(x, y) = \frac12 \sum_{i} F_i(x) F_i(y)^T = \left(D^{YX}(x, y)\right)^T, 
    \]
    and $D^X(x, y) = D(x), \ D^Y(x, y) = D(y)$. 
\end{proof}

\subsection{Random attractors}
\label{sec:main-pw}

By Proposition \ref{prop:SDE}, the RQF~\eqref{eq:rqf}, equivalently written in terms of Eq.~\eqref{eq:brownian-1}, induces a smooth random dynamical system $(\theta, \varphi)$ on $\bbS^{n-1}$. By Theorem~\ref{theo:Correspondence}, the unique stationary measure $\bar \rho$ from Theorem~\ref{th:rqf-one} induces a unique invariant Markov measure 
$$ \mu(\dd \omega, \dd x) = \mathbb P(\dd \omega) \mu_\omega(\dd x),$$
where $\bbP$ is the Wiener measure on the canonical path space $\Omega$.
We now turn to characterizing its weak random point attractor (see Section~\ref{sec:attractors}) whose existence follows directly from Proposition \ref{prop:ExWeakPntAttr} and the compactness of $\bbS^{n-1}$.
\begin{theorem}[Random attractor consists of two poles]
\label{th:rqf-attractor}
    The invariant sample measure $\mu_{\omega}$ of the RQF is equidistributed on $N =2$ random points
    \[
    \mu_{\omega} = \frac{1}{2}\delta_{a(\omega)} +\frac{1}{2}\delta_{-a(\omega)}, 
    \]
    where $a(\omega): \Omega \to \bbS^{n-1}$ is an $\calF_{-\infty}^0$-measurable map. In particular, for any $x, y \in \bbS^{n-1}$
    \[
    \dist (\varphi(t, \omega, y), \varphi(t, \omega, x)) \to 0 \quad \text{or} \quad \dist (\varphi(t, \omega, y), -\varphi(t, \omega, x)) \to 0,
    \]
  $\Omega$-almost surely.
\end{theorem}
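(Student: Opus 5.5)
The plan has three parts. First I show that the top Lyapunov exponent is negative, so that Proposition~\ref{prop:discrete} gives a finite, equally weighted sample measure. Second, I use the odd symmetry of the equation to show that the atoms come in antipodal pairs. Third, I use a one-dimensional reduction of the two-point motion to show that there is exactly one pair.

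\emph{Step 1: $\Lambda<0$.} I will write the linearisation of \eqref{eq:brownian-1} along $X_t$ as
\[
\rmd V_t=-\bigl(P_{X_t}\partial Q_t-X_t^TV_t\,\partial Q_tX_t-X_tX_t^T\partial Q_t V_t\bigr)\text{-type terms},
\]
and then compute $\rmd\log\|V_t\|$ in It\^o form. The law of $Q_t$ is invariant under $Q\mapsto OQO^T$ for $O\in O(n)$. Consequently the pair $(X_t,V_t/\|V_t\|)$ on the unit tangent bundle is rotation-invariant in law, and the drift of $\log\|V_t\|$ is a constant. That constant is $\Lambda$, by the Furstenberg--Khasminskii formula and \eqref{eq:asymptoticLE2}. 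I expect $\Lambda$ to be an explicit negative number of order $-1$ (the "gradient" part $-\|V\|^2 X^T\partial Q X$ contracts after It\^o correction). As a cross-check, I will compare with the behaviour of $1-c_t^2$ near $c=\pm1$ from Step 3. Uniqueness of the stationary measure $\bar\rho$ is Theorem~\ref{th:rqf-one}. Proposition~\ref{prop:discrete} therefore gives $\mu_\omega=\frac1N\sum_{i=1}^N\delta_{a_i(\omega)}$ with $\calF_{-\infty}^0$-measurable atoms.

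\emph{Step 2: antipodal symmetry.} The vector fields in \eqref{eq:brownian-1} are odd, so by uniqueness of solutions $\varphi(t,\omega,-x)=-\varphi(t,\omega,x)$. Let $\sigma(x)=-x$. Since $\sigma_*\bar\rho=\bar\rho$, the pull-back limit of Theorem~\ref{theo:Correspondence} gives $\sigma_*\mu_\omega=\mu_\omega$. Hence the atoms are closed under $x\mapsto -x$, $N=2m$, and $\mu_\omega$ is uniform on $m$ antipodal pairs $\{\pm a_j(\omega)\}$.

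\emph{Step 3: $m=1$.} This is the step I expect to be the main obstacle. Let $c_t=\langle X_t,Y_t\rangle$ for the two-point motion \eqref{eq:rqf-two}. By orthogonal invariance of the law of $Q$, $c_t$ is an autonomous diffusion on $[-1,1]$, and I will compute its drift $b(c)$ and diffusion coefficient $a(c)$ by It\^o's formula; I expect $a(c)\propto(1-c^2)c^2$ up to lower-order terms. The goal is to show that $c_t$ has no stationary distribution on the open set $(-1,1)$, and more strongly that $|c_t|\to1$ almost surely. For this I would use Feller's boundary classification. Both $\pm1$ should be attracting because $\Lambda<0$. The point $c=0$ (orthogonality) may be a singular interior point, and I will need to check it is not trapping. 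The scale function should show that the process leaves every compact subset of $(-1,1)\setminus\{0\}$ and accumulates at $\pm1$.

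Now suppose $m\ge2$ with positive probability. Then $\bbE[\mu_\omega\otimes\mu_\omega]$ restricted to $\{y\neq\pm x\}$ is a nonzero invariant measure of the two-point process. Its image under $(x,y)\mapsto\langle x,y\rangle$ would be a stationary measure of $c_t$ on $(-1,1)$, which is a contradiction. Hence $\mu_\omega=\frac12\delta_{a(\omega)}+\frac12\delta_{-a(\omega)}$.

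The pathwise dichotomy $\dist(\varphi(t,\omega,y),\pm\varphi(t,\omega,x))\to0$ is then exactly the almost sure convergence $|c_t|\to1$ from the Feller test, applied with $X_0=x$ and $Y_0=y$. If needed, I will add the argument that $c_t$ cannot oscillate between neighbourhoods of $+1$ and $-1$, using continuity of paths together with the transience established in the boundary classification.
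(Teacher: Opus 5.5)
Your argument is correct, but it is organized differently from the paper's.

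\textbf{The paper's route.} It never uses Lyapunov exponents for this theorem. It computes the process $Z_t=\langle X_t,Y_t\rangle$ explicitly (Lemma~\ref{lem:zt}). The drift is $-2z(1-z^2)$ and the diffusion coefficient is $4(1-z^2)^2$, so, contrary to your guess $a(c)\propto(1-c^2)c^2$, there is no degeneracy at $z=0$. The scale function is $s(z)\propto\arcsin z-\arcsin c$, which is finite at both endpoints, and the Feller boundary test gives $Z_t\to\pm1$ almost surely. So the paper obtains the pathwise dichotomy first. It then asserts, without further argument, that $\supp\mu_\omega\subseteq\{a(\omega),-a(\omega)\}$. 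It uses oddness of the flow to get exactly two points, and Proposition~\ref{prop:discrete_cont} to get equal weights.

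\textbf{Your route.} You get discreteness from $\Lambda<0$ via Proposition~\ref{prop:discrete}. You get the antipodal symmetry as $\sigma_*\mu_\omega=\mu_\omega$, with $\sigma(x)=-x$, from $\sigma_*\bar\rho=\bar\rho$ and the pull-back limit. You reduce to a single pair via the restriction of $\bbE[\mu_\omega\otimes\mu_\omega]$ to the invariant set $\{y\neq\pm x\}$. This restriction would project to a nonzero stationary measure of $c_t$ on $(-1,1)$, which the Feller test rules out.

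\textbf{What each route buys.} Your route supplies rigor exactly where the paper is thin. Your Steps~2--3 make precise both the passage from two-point convergence to the structure of $\mu_\omega$, which the paper calls ``immediate'', and the paper's symmetry step. The paper's route is more economical.

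\textbf{On your Step~1.} It is dispensable. The $\bbE[\mu_\omega\otimes\mu_\omega]$ argument needs no discreteness: if $\mu_\omega$ is not carried by a single antipodal pair, then $\mu_\omega\otimes\mu_\omega(\{y\neq\pm x\})>0$ anyway. The weights $\tfrac12,\tfrac12$ then follow from $\sigma_*\mu_\omega=\mu_\omega$. If you keep Step~1, note that you left $\Lambda$ uncomputed and only expected its sign. It can be read off from the $Z$-process: the It\^o drift of $\log(1-Z_t)$ tends to $-4$ as $Z_t\to1$. Since $1-Z_t\approx\tfrac12\|X_t-Y_t\|^2$, this gives $\Lambda=-2$ for every $n$ in the normalization \eqref{eq:A}.
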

The proof of Theorem \ref{th:rqf-attractor} exploits the properties of an auxiliary process $Z_t = \left<X_t, Y_t\right>$ tracking the angle between two coupled processes $X_t$ and $Y_t$. Thus, before we proceed to the proof of Theorem \ref{th:rqf-attractor} we derive the explicit form of the dynamics of $Z_t$ in the following lemma.
\begin{lemma}[Process $Z_t$]
\label{lem:zt}
    Let $(X_t, Y_t)$ be the two-point process of the RQF as in Eq. \eqref{eq:rqf-two} and let $Z_t = \left<X_t, Y_t\right>$. Then the process $Z_t$ with values in $[-1,1]$ solves the It\^{o} SDE
    \[
    \rmd Z_t = -2Z_t(1-Z_t^2)\rmd t + Z_t \left(X_t^T\rmd Q_t X_t + Y_t^T\rmd Q_t Y_t\right) - 2X_t^T\rmd Q_t Y_t.
    \]
\end{lemma}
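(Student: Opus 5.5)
The plan is to compute $\rmd Z_t$ by the It\^o product rule, $\rmd Z_t = \langle \rmd X_t, Y_t\rangle + \langle X_t, \rmd Y_t\rangle + \sum_i \rmd[X^i, Y^i]_t$. For this I first need the It\^o form of the RQF. I will not recompute the Stratonovich-to-It\^o correction by hand. Instead I use Theorem~\ref{th:rqf-one}: $X_t$ is a Brownian motion on $\bbS^{n-1}$ with generator $\frac12\Delta$. Applying this generator to the coordinate functions $x\mapsto x^i$, which satisfy $\Delta x^i = -(n-1)x^i$ on $\bbS^{n-1}$, gives the It\^o form
\[
\rmd X_t = -\tfrac{n-1}{2}X_t\,\rmd t - P_{X_t}\rmd Q_t X_t ,
\]
and likewise for $Y_t$ with the same $Q_t$. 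As a sanity check, the martingale part is exactly the Stratonovich integrand and the drift is the one the generator forces. Alternatively, one can do the correction directly via the formula $-\frac12\nabla D$ from the proof of Lemma~\ref{lem:generator-xy}.

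The covariance structure of the noise follows from \eqref{eq:A}:
\[
\rmd[Q^{ij},Q^{kl}]_t = (\delta_{ik}\delta_{jl}+\delta_{il}\delta_{jk})\,\rmd t,
\]
which also holds on the diagonal, since $Q^{ii}=\sqrt2 B^{ii}$. Consequently, for any (adapted) vectors $a,b,c,d$,
\[
\rmd[a^T Q b,\, c^T Q d]_t = \big(\langle a,c\rangle\langle b,d\rangle + \langle a,d\rangle\langle b,c\rangle\big)\rmd t .
\]
This identity is the only computational tool needed.

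The martingale part is the easy step. Using $Y^TP_X = Y^T - Z X^T$ and symmetry of $\rmd Q$, we get
\[
-Y^TP_X\rmd Q X - X^TP_Y \rmd Q Y = Z\big(X^T\rmd Q X + Y^T \rmd Q Y\big) - 2X^T\rmd Q Y ,
\]
which is precisely the claimed noise term. The drift parts of $\rmd X_t$ and $\rmd Y_t$ contribute $-(n-1)Z\,\rmd t$.

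The main step, and the only place an error could creep in, is the cross-variation $\sum_i \rmd[X^i,Y^i]$. I write $X^i$'s noise as $-(P_Xe_i)^T\rmd Q\,X$ and $Y^i$'s as $-(P_Ye_i)^T\rmd Q\,Y$, then apply the identity above with $a=P_Xe_i$, $b=X$, $c=P_Ye_i$, $d=Y$. Summing over $i$ gives two pieces:
\[
\operatorname{tr}(P_XP_Y)\,Z + \langle P_XY, P_YX\rangle .
\]
With $|X|=|Y|=1$ these evaluate to $\operatorname{tr}(P_XP_Y)=n-2+Z^2$ and $\langle Y-ZX,\,X-ZY\rangle = Z^3-Z$. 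The cross-variation is therefore $\big((n-3)Z+2Z^3\big)\rmd t$. Adding the drift contribution $-(n-1)Z\,\rmd t$, the dimension-dependent terms cancel and leave $-2Z(1-Z^2)\,\rmd t$, as claimed.

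Finally, $Z_t\in[-1,1]$ holds trivially by Cauchy--Schwarz, since both processes stay on the sphere.
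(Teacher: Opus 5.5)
Your proof is correct and has the same skeleton as the paper's: first the It\^o form of the RQF, then the It\^o product rule for $\langle X_t,Y_t\rangle$. Your two cross-variation pieces $Z\tr(P_XP_Y)=Z(n-2+Z^2)$ and $\langle P_XY,P_YX\rangle=Z^3-Z$ are exactly the paper's terms $A$ and $B$. You obtain them coordinate-free from $\rmd[a^TQb,c^TQd]_t=(\langle a,c\rangle\langle b,d\rangle+\langle a,d\rangle\langle b,c\rangle)\rmd t$ rather than by index manipulation.

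The real difference is Step~1. The paper computes the Stratonovich-to-It\^o correction $\frac12\sum_{j,k,l}\sigma^l_{j,k}\,\partial_l\sigma^i_{j,k}$ by hand. You instead read the drift off Theorem~\ref{th:rqf-one} via $b^i=Lx^i=\frac12\Delta x^i=-\frac{n-1}{2}x^i$. This is legitimate: the theorem precedes the lemma, and since the heat equation holds for every initial law, the generator is indeed $\frac12\Delta$. It also avoids the most error-prone index computation. The trade-off is that the lemma now rests on the proof of Theorem~\ref{th:rqf-one}, which is itself a coordinate computation of the Fokker--Planck operator. The paper's direct route keeps the lemma self-contained.

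Your covariance rule also pays off later. It gives $\rmd[Z,Z]_t=4(1-Z_t^2)^2\,\rmd t$ in one line, which is the diffusion coefficient used in the proof of Theorem~\ref{th:rqf-attractor}.

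One caution: drop the aside about obtaining the correction from $-\frac12\nabla D$ as in Lemma~\ref{lem:generator-xy}. That expression is not the Stratonovich-to-It\^o rule; the correct drift is $F_0^i+\frac12\sum_j\sum_l F_j^l\,\partial_l F_j^i$. For instance, with $D=\frac12(I-xx^T)$ it produces the outward drift $\frac{n+1}{4}x$ instead of $-\frac{n-1}{2}x$.
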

\begin{proof}
    We first transform Eq.~\eqref{eq:rqf} into It\^{o} form and then apply It\^{o}'s lemma.
    
    \emph{Step 1: It\^{o} form of the RQF.} Using the notation $\hat V^i_{j} = \hat V^i_{j}(X) =  (\delta_{i, j} - X^i X^j)$ and rewriting the RQF equation into tensor form we obtain
\[
\rmd X^i = -\frac{1}{\sqrt{2}}  \sum_{j, k} (\hat V^i_j X^k + \hat V^i_k X^j) \partial B^{j, k}
:= -\sum_{j, k} \sigma^i_{j, k}(X)  \partial B^{j, k}.
\]
Applying the Stratonovich to It\^{o} conversion, we therefore get
\[
\begin{aligned}
    \rmd X^i =- \frac{1}{\sqrt{2}}  \sum_{j, k} (\hat V^i_j X^k + \hat V^i_k X^j) \rmd B^{j, k} + \frac{1}{2} \sum_{j, k, l} \frac{\partial \sigma^i_{j, k}(x)}{\partial x_l}\sigma^l_{j, k}(x)\rmd t,
\end{aligned}
\]
where the correction term equals
\[
\begin{aligned}
\MoveEqLeft
    \sum_{j, k, l} \frac{\partial \sigma^i_{j, k}(x)}{\partial x_l}\sigma^l_{j, k}(x) = \frac12 \sum_{j, k, l} \left(\frac{\partial \hat V^i_j}{\partial x^l}x^k + \hat V^i_j \delta_{k, l} + \frac{\partial \hat V^i_k}{\partial x^l}x^j + \hat V^i_k\delta_{j, l}\right)(\hat V^l_j x^k + \hat V^l_k x^j) \\
    &=\frac12\sum_{j, k, l} \left(\frac{\partial \hat V^i_j}{\partial x^l}x^k + \frac{\partial \hat V^i_k}{\partial x^l}x^j \right)(\hat V^l_j x^k +\hat  V^l_k x^j) + \frac12\sum_{j, k, l} \left(\hat V^i_j \delta_{k, l} + \hat V^i_k\delta_{j, l}\right)(\hat V^l_j x^k + \hat V^l_k x^j)\\
    &:= I + II.
\end{aligned}
\]
For $II$, using $\sum_b V^a_b x^b = \sum_b V^b_a x^b = 0$, we obtain
\[
II = \frac12 \left( \sum_{j, l} \hat V^i_j \hat V^l_j x^l + \sum_{j, l, k} \delta_{j, l} \hat V^l_j \hat V^i_k x^k  +\sum_{k, l, j} \delta_{k, l} \hat V^l_k \hat V^i_j  x^j + \sum_{k, l}\hat V^i_k \hat V^l_k x^l \right) = 0.
\]
Differentiating the elements of the projection matrix,
\[
\frac{\partial \hat V^a_b}{\partial x^c} = -x^a\delta_{b, c} - x^b\delta_{a, c},
\]
we calculate for $I$, using again $\sum_b V^a_b x^b = \sum_b V^b_a x^b = 0$, 
\[
\begin{aligned}
I &= -\frac12\sum_{j, k, l} \left(x^kx^i\delta_{l, j} + x^kx^j\delta_{i, l} + x^jx^i\delta_{k, l} + x^jx^k\delta_{i, l} \right)(\hat V^l_j x^k +\hat  V^l_k x^j)\\
&=-\frac12\sum_{j, k, l} \Big(\hat V^l_jx^i(x^k)^2\delta_{l, j} + V^l_jx^j(x^k)^2\delta_{i, l} + V^l_jx^jx^ix^k\delta_{k, l} + V^l_jx^j(x^k)^2\delta_{i, l} \\
&\quad + \hat  V^l_k x^kx^ix^j\delta_{l, j} + \hat  V^l_kx^k(x^j)^2\delta_{i, l} + \hat  V^l_kx^i(x^j)^2\delta_{k, l} + \hat  V^l_kx^k(x^j)^2\delta_{i, l}\Big)\\
&= -\frac12\sum_{j, l, k}\left(\hat V^l_jx^i(x^k)^2\delta_{l, j} + \hat  V^l_kx^i(x^j)^2\delta_{k, l}\right) = -\sum_{l, k}\hat V^l_lx^i(x^k)^2 \\
&= -x^i\sum_{l, k}(1 - (x^l)^2)(x^k)^2 = -(n - 1)x^i.
\end{aligned}
\]
Combining the expressions we obtain the It\^{o} SDE
\[
\rmd X_t = -\frac{n-1}{2}X_t\rmd t - P_{X_t}\rmd Q_tX_t.
\]

 \emph{Step 2: Applying It\^{o}'s lemma to $f(x, y) := \left<x, y\right>$, we obtain} 
 \begin{align}
 \rmd Z_t &= \rmd f(X_t, Y_t)= X^T_t \rmd Y_t + Y^T_t\rmd X_t + \rmd [X_t, Y_t]  \notag \\
 &= -(n-1)Z_t \rmd t + \rmd [X_t, Y_t] + Z_t \left(X_t^T\rmd Q_t X_t + Y_t^T\rmd Q_t Y_t\right) - 2X_t^T\rmd Q_t Y_t, \label{eq:lem49-step2}
\end{align}
 where the quadratic covariation is $[X_t, Y_t] = \int_0^t q(Z_t) \rmd t$ and the function $q$ takes the form
 \[
 \begin{aligned}
 q(z) &= \sum_{i, j, k} \sigma^i_{j, k}(x)\sigma^i_{j, k}(y) = \frac{1}{2}\sum_{i, j, k} (\hat V^i_{j}(x)x^k + \hat V^i_{k}(x)x^j)(\hat V^i_{j}(y)y^k + \hat V^i_{k}(y)y^j) \\
 &=z\sum_{i, j}\hat V^i_{j}(x)\hat V^i_{j}(y) + \sum_{i, j, k}\hat V^i_{j}(x)\hat V^i_{k}(y)x^ky^j := A + B.
  \end{aligned}
 \]
 Calculating $A$, we obtain
 \[
 A = z\sum_{i, j}\hat V^i_{j}(x)\hat V^i_{j}(y) = z\sum_{i, j}(\delta_{i, j} - x^ix^j)(\delta_{i, j} - y^iy^j) = z(n - 2 + z^2),
 \]
 and analogously for $B$:
 \[
 \begin{aligned}
 B &= \sum_{i, j, k}(\delta_{i, j} -x^ix^j)(\delta_{i, k} - y^iy^k)x^ky^j \\
 &= \sum_{i}x^iy^i - \sum_{i, j} (x^i)^2x^jy^j - \sum_{i, k} (y^i)^2y^k x^k + \sum_{i, j, k}x^ix^jy^iy^kx^ky^j = z - 2z + z^3.
  \end{aligned}
 \]
 Combining $A$ and $B$, we conclude that $q(z) = nz - 3z +2z^3$ and, hence, plugging $[X_t, Y_t]$ into \eqref{eq:lem49-step2} we get the result.
\end{proof}
We are now ready to prove the main theorem.
\begin{proof}[Proof of Theorem \ref{th:rqf-attractor}]
For any two points $x, y \in \bbS^{n-1}$, the condition $x = y \lor x = -y$ is equivalent to $\left<x, y\right>^2 = 1$. Thus, to prove convergence to a polar/anti-polar configuration, it is sufficient to show that either $\left<X_t, Y_t\right> \to  1$ or $\left<X_t, Y_t\right> \to  -1$ . The structure of the proof is the following: we first define a $1$-dimensional diffusion $\hat Z_t \in [-1,1]$ distributionally equivalent to the process $Z_t = \left<X_t, Y_t\right>$. Then we show that $\hat Z_t \to \pm 1$ almost surely as $t\to \infty$. 
Finally, we prove that the weak random point attractor consists of two random points and characterize the sample measures.

    \emph{Step 1: tracking the scalar product.} Consider the coupled process \eqref{eq:rqf-two} and define the process $Z_t := \left<X_t, Y_t\right>$ for $t \in [0, \infty)$. Using Lemma \ref{lem:zt} we conclude that for initial conditions $X_0 = x, Y_0 = y, Z_0 = \left<x, y\right>$, the triplet $(X_t, Y_t, Z_t)$ follows the dynamics 
    \[
    \begin{aligned}
        \rmd X_t &= -P_{X_t} \partial Q_t X_t, \\
        \rmd Y_t &= -P_{Y_t} \partial Q_t Y_t, \\
        \rmd Z_t &= -2Z_t(1-Z_t^2)\rmd t + Z_t \left(X_t^T\rmd Q_t X_t + Y_t^T\rmd Q_t Y_t\right) - 2X_t^T\rmd Q_t Y_t.
    \end{aligned}
    \]
    Calculating the covariance $\Sigma$ of the process $Z_t$ we obtain
    \[
    \begin{aligned}
    \Sigma(z, x, y) &= \Sigma(z) = 2\sum_{i, j}(z(x_ix_j + y_iy_j) - (x_iy_j + y_ix_j))^2  \\
    &=2\left(z^2 \sum_{i, j} (x_ix_j + y_iy_j)^2 + \sum_{i, j} (x_iy_j + y_ix_j)^2 - 2z\sum_{i, j} (x_ix_j + y_iy_j)(x_iy_j + y_ix_j)\right) \\
    &= 2\left(z^2(2 + 2z^2) + (2+ 2z^2) - 8z^2\right) = 4(1 - z^2)^2,
    \end{aligned}
    \]
    where we used that $x, y \in \bbS^{n-1}$. Since $\Sigma $ is independent of $X$ and $Y$, the transition probability $p(z, z_0| t) = p(z)$ of the process $Z_t$ takes a closed form and is given by the solution of the Fokker-Planck equation
\[
\begin{aligned}
\partial_t p = \partial_z (2z(1-z^2) p(z)) + \partial_{zz}(2(1-z^2)^2p(z))
\end{aligned}
\]
with $p_0 = \delta_{z_0}$. The above Fokker-Planck equation corresponds to the process $\hat Z$ defined on the interval~$[-1,1]$ by
\[
\rmd \hat Z_t = -2\hat Z_t(1-\hat Z_t^2) \rmd t+ 2(1 - \hat Z_t^2) \rmd B_t,
\]
where $B_t$ is an arbitrary one-dimensional Brownian motion, implying that the distributional properties of the process $Z_t$ are identical to the ones of $\hat Z_t$.

 \emph{Step 2: $Z_t^2 \to 1$.} 
We use the \emph{Feller approach} to characterize the boundary behavior of the process $\hat Z_t$, see e.g. \cite[Section 5.5]{karatzas2012brownian}. Fix $c\in(-1, 1)$ and recall that the scale function $s(z)$ has the following expression:
\[
\begin{aligned}
        s(z) &= \int_c^{z} \exp\left(\int_c^y \frac{x}{(1-x^2) }\rmd x \right)\rmd y = \int_c^{z} \exp\left( -\frac12(\log(1-y^2) - \log(1-c^2))\right)\rmd y \\
        &=\sqrt{(1-c^2)}\int^z_c\frac{1}{\sqrt{(1-y^2)}}\rmd y =\sqrt{(1-c^2)}(\arcsin (z) - \arcsin (c)).
        \end{aligned}
    \]
    Since $s(-1) > -\infty$ and $s(1)< +\infty$, both boundaries are attractive and, thus, by \cite[Proposition 5.22] {karatzas2012brownian}, we obtain
    \[
    \bbP(\hat Z_t \to 1) + \bbP(\hat Z_t \to -1) =1.
    \]
Since the processes $Z_t$ and $\hat Z_t$ have the same law, we conclude that for any initial conditions $X_0, Y_0 \in \bbS^{n-1}$, the processes $X_t$ and $Y_t$ satisfy: 
\[
\bbP(\left<X_t, Y_t\right> \to 1) + \bbP(\left<X_t, Y_t\right> \to -1) =1.
\]

    \emph{Step 3: from distributions to attractors.}
    From Step 2 we know that the coupled processes $(X_t, Y_t)$ almost surely converge to either a polar or an anti-polar configuration. In particular, we immediately obtain that the induced cocycle statisfies
    \[
    \dist (\varphi(t, \omega, y), \varphi(t, \omega, x)) \to 0 \quad \text{or} \quad \dist (\varphi(t, \omega, y), -\varphi(t, \omega, x)) \to 0,
    \]
    almost surely, for any fixed $x,y \in \bbS^{n-1}$.
    It is immediate that the weak random point attractor $A(\omega) = \operatorname{supp}(\mu_\omega)$ consists of at most two points $\{a(\omega), -a(\omega)\}$, where $a(\omega)$ is $\calF_{-\infty}^0$-measurable. 
    But note that, by a symmetry argument, the equality $\varphi(t, \omega, x) = \xi$ always implies that
$\varphi(t, \omega, -x) = -\xi$
    and, thus, $A(\omega)$ consists of exactly two points. 
    Finally, since $A(\omega)$ is discrete, by Proposition \ref{prop:discrete_cont}, the sample measures $\mu_\omega$ are equidistributed on $A(\omega)$. 
\end{proof}
\begin{remark}
    Theorem \ref{th:rqf-attractor} (and Step 2 of the proof, in particular) can be interpreted as an extension of the approach to prove (partial) synchronization, as introduced in \cite[Lemma 2.8.]{cranston2016weak}, to the case when the random attractor is discrete but not a singleton. Instead of tracking the distance between points as in \cite{cranston2016weak}, we track the scalar product between them. We also remark that a similar argument has been recently used in \cite{fedorov2026clustering} in the context of transformer models. An alternative approach to prove partial synchronization of the RQF may make use of the results by Raimond \cite{raimond1999flots}.
\end{remark}

\begin{proposition}[Convergence to particular invariant measure]
Let $P_t$ be the semigroup of the two-point process \eqref{eq:rqf-two} and $P^*_t$ be its adjoint, then 
\[
(P_t)^*(\bar \rho \times \bar \rho) \to
\bar\rho(\rmd x)\times\left(\frac{1}{2}\delta_x(\rmd y) + \frac{1}{2}\delta_{-x}(\rmd y)\right).
\]
\end{proposition}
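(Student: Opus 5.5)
Since $\bbS^{n-1}\times\bbS^{n-1}$ is compact, I would show convergence against a fixed test function $u\in C(\bbS^{n-1}\times\bbS^{n-1})$, i.e.\
\[
\int \bbE\big[u(X_t,Y_t)\big]\,\bar\rho(\rmd x)\bar\rho(\rmd y)\;\to\;\int \tfrac12\big(u(x,x)+u(x,-x)\big)\,\bar\rho(\rmd x),
\]
where $(X_t,Y_t)$ solves \eqref{eq:rqf-two} with $X_0=x$, $Y_0=y$. The argument uses two ingredients. First, by Step~2 of the proof of Theorem~\ref{th:rqf-attractor}, $Z_t=\langle X_t,Y_t\rangle\to\pm1$ almost surely, so the joint law concentrates near the diagonal and the anti-diagonal. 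Second, the weights of the two pieces are fixed by a symmetry. Write $D_\epsilon^\pm=\{(x,y):\ \|y\mp x\|<\epsilon\}$ and decompose
\[
u(X_t,Y_t)=u(X_t,Y_t)\mathbf 1_{Z_t>0}+u(X_t,Y_t)\mathbf 1_{Z_t\le 0}.
\]
On $\{Z_t>0\}$, uniform continuity of $u$ and $Z_t\to 1$ give $u(X_t,Y_t)-u(X_t,X_t)\to0$ a.s. On $\{Z_t\le0\}$ the same holds with $-X_t$ in place of $Y_t$. By dominated convergence, after integrating over $(x,y)$ against $\bar\rho\times\bar\rho$ and using Fubini, it remains to identify the limits of $\int\bbE[u(X_t,X_t)\mathbf 1_{Z_t>0}]$ and of the analogous anti-diagonal term.

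\emph{Equal weights.} The map $(x,y)\mapsto(x,-y)$ preserves $\bar\rho\times\bar\rho$ and commutes with the two-point flow, because $\varphi(t,\omega,-y)=-\varphi(t,\omega,y)$ (the vector fields are odd, as used in Corollary~\ref{th:rqf-two}). Hence under the stationary-in-marginals initial law $\bar\rho\times\bar\rho$, the triple $(X_t,Y_t,Z_t)$ has the same law as $(X_t,-Y_t,-Z_t)$. It follows that $\bbP(Z_t>0)=\bbP(Z_t<0)$. Moreover $\bbP(Z_t=0)\to0$, since $Z_t\to\pm1$ almost surely, so both probabilities tend to $1/2$.

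\emph{Decoupling.} The main obstacle is to show that $X_t$ becomes asymptotically independent of the sign of $Z_t$, with law $\bar\rho$. I would handle it by a second symmetry, rotation invariance. For $O\in O(n)$, the process $O^TQ_tO$ has the same law as $Q_t$, and $OX_t$ solves the RQF driven by $O^TQ_tO$. Since $\bar\rho\times\bar\rho$ is invariant under the diagonal action $(x,y)\mapsto(Ox,Oy)$, the law of $(X_t,Y_t)$ is $O(n)$-invariant. Because $Z_t$ is rotation invariant, the conditional law of $X_t$ given $\mathbf 1_{Z_t>0}$ is again $O(n)$-invariant on the sphere, hence equal to $\bar\rho$. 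This gives
\[
\bbE\big[u(X_t,X_t)\mathbf 1_{Z_t>0}\big]=\bbP(Z_t>0)\int u(x,x)\,\bar\rho(\rmd x)\;\to\;\tfrac12\int u(x,x)\,\bar\rho(\rmd x),
\]
and the anti-diagonal term is treated the same way. Combining the two pieces yields the claim. The delicate points are the equivariance under $O(n)$ of the Stratonovich flow driven by the conjugated noise, and justifying that the error terms vanish in expectation; the latter follows from boundedness of $u$ together with almost sure convergence.
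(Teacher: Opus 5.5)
Your proof is correct, but it follows a different route from the paper. The paper argues in one line via \cite[Proposition 2.6]{baxendale1991statistical}. For a flow whose one-point motion has the unique stationary measure $\bar\rho$, that result identifies the limit of the two-point law started from $\bar\rho\times\bar\rho$ as $\bbE[\mu_\omega\otimes\mu_\omega]$. Inserting $\mu_\omega=\frac12\delta_{a(\omega)}+\frac12\delta_{-a(\omega)}$ from Theorem~\ref{th:rqf-attractor} together with $\bbE[\mu_\omega]=\bar\rho$ then gives the stated limit. This is conceptually clean and general. However, it depends on the full identification of the sample measures; in particular the weights $\frac12$ come from Proposition~\ref{prop:discrete_cont} and the attractor argument of Step~3. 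You avoid sample measures, the correspondence theorem and the attractor altogether. You use only the almost sure convergence $Z_t\to\pm1$ from Step~2, dominated convergence and symmetries of the two-point motion, so you obtain the limit and its weights directly. In fact the step you call the main obstacle needs no rotation invariance. Since $(X_t,Y_t)\stackrel{d}{=}(X_t,-Y_t)$ jointly and $X_t\sim\bar\rho$ for every $t$, the measures $m_t^\pm(B):=\bbP(X_t\in B,\ \pm Z_t>0)$ coincide and add up to $\bar\rho-\bbP(X_t\in\cdot\,,\ Z_t=0)$. Hence $|m_t^\pm(B)-\frac12\bar\rho(B)|\le\frac12\bbP(Z_t=0)\to0$ uniformly in $B$. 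The reflection alone therefore gives both the equal weights and the decoupling. Your argument then carries over to the odd-vector-field setting of Theorem~\ref{th:main-sym} with a unique stationary measure $\hat\rho$ (uniqueness forces $\hat\rho$ to be symmetric, so the reflection preserves $\hat\rho\times\hat\rho$), whenever $\langle X_t,Y_t\rangle^2\to1$ a.s. By contrast, $O(n)$-equivariance is special to the isotropic noise. Your equivariance step is still valid, up to a harmless slip: $OX_t$ is driven by $OQ_tO^T$ rather than $O^TQ_tO$, and both have the law of $Q_t$. Finally, state the pointwise step as $(u(X_t,Y_t)-u(X_t,X_t))\mathbf 1_{\{Z_t>0\}}\to0$ a.s., since on $\{Z_t\to-1\}$ it is the indicator that eventually vanishes.
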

\begin{proof}
    Since $\bar \rho$ is the unique ergodic measure of the RQF, the result follows directly from \cite[Proposition 2.6]{baxendale1991statistical}. 
\end{proof}
\begin{remark}[Solutions of random Wasserstein gradient flow]
    Consider the random Wasserstein gradient flow discussed in Remark \ref{rem:random-wgf} and define its solution $\rho_t : [0, \infty)\times \Omega \to \calP(\bbS^{n-1})$ as the pushforward of $\rho_0$ under the map $\varphi(t, \omega, \cdot)$
    \[
    \rho_t = \varphi(t, \omega, \cdot)^*\rho_0.
    \]
    Then $\rho_t $ necessarily satisfies
    \[
    \rho_t - \varphi(t, \omega, \cdot)^*\rho_\omega \stackrel{w}{\to} 0, 
    \]
    for some probability measure $\rho_\omega$ satisfying $\supp \rho_\omega \subseteq A(\omega)$, $\Omega$-a.s.
    In other words, in the long-time limit, $\rho_t$ becomes effectively supported solely on the image of the random attractor. 

    Note that this property is reminiscent of the behaviour of deterministic Wasserstein gradient flows of a potential energy $F$, where one usually expects the limiting measure to be supported on the set of critical points of $F$:
    \[
    \rho_t - \rho_{\infty} \stackrel{w}{\to} 0, \quad  \supp \rho_{\infty} \subseteq \Crit_F,
    \]
    in the long-time limit. We also remark that the saddle points of the deterministic quadratic form do not survive the randomization.
\end{remark}

\section{Discussion and Future Work}
\label{sec:discussion}

We conclude with discussing some additional aspects of the RQF approach. We start with an alternative proof of Theorem~\ref{th:rqf-attractor} for the RQF on $\bbS^1$.

\subsection{Relation to harmonic noise on the circle}
Remarkably, the RQF model on the circle $\bbS^1$ coincides with a common example of an RDS which has negative maximal Lyapunov exponent but does not exhibit convergence to a singleton. 
In more detail, let $X_t = (X_t^1, X_t^2) \in \bbS^1 \subset \bbR^2$ follow the dynamics \eqref{eq:rqf}. Expanding the projection $P_{X_t}$ yields
\[
\begin{pmatrix}
\rmd X^1_t\\
\rmd X^2_t
\end{pmatrix} = \begin{pmatrix}
 1-(X^1_t)^2 & -X^1_t X^2_t\\
 -X^2_tX^1_t & 1- (X^2_t)^2
\end{pmatrix}  \begin{pmatrix}
\partial B_{11} &  \frac{1}{2}\partial(B_{12} + B_{21})\\
  \frac{1}{2}\partial(B_{12} + B_{21}) & \partial B_{22} 
\end{pmatrix} \begin{pmatrix}
 X^1_t\\
 X^2_t
\end{pmatrix}.
\]
Since $X_t \in \bbS^{1}$, we have $(X^2_t)^2 \equiv 1 - (
X^1_t)^2$. Therefore it is sufficient to consider the dynamics of the first coordinate $X^1_t$, which satisfies
\begin{align*}
\rmd X^1_t &= \begin{pmatrix}
 1-(X^1_t)^2 & -X^1_t \sqrt{1- (X^1_t)^2}
\end{pmatrix} 
\begin{pmatrix}
 \partial B_{11} &  \frac{1}{2}\partial(B_{12} + B_{21})\\
  \frac{1}{2}\partial(B_{12} + B_{21}) & \partial B_{22}
\end{pmatrix}
\begin{pmatrix}
 X^1_t\\
 \sqrt{1 - (X^1_t)^2} \end{pmatrix}
 \\
 &= (1-(X^1_t)^2)\left(X^1_t\partial B_{11} + \frac{1}{2}\sqrt{1 - (X^1_t)^2}\partial (B_{12}+B_{21})\right) \\
 &\qquad - X^1_t \sqrt{1- (X^1_t)^2}\left(\frac12 X^1_t\partial (B_{12}+B_{21}) + \sqrt{1 - (X^1_t)^2}\partial B_{22}\right) \\
 &= X^1_t(1-(X^1_t)^2) \partial (B_{11} - B_{22} )+ 
 \frac{1}{2}\sqrt{1 - (X^1_t)^2} \left(1-2(X^1_t)^2 \right) \partial (B_{12}+B_{21}).
\end{align*}
After the change of variables $X^1_t = \cos \phi_t$ we obtain the system
\begin{align}
\rmd \phi_t &= -(1-(X^1_t)^2)^{-1/2}\rmd X^1_t \notag\\
&= - \cos \phi_t \sin \phi_t   \partial\left(B_{11} - B_{22}\right) + \frac12(\cos\phi_t^2 - \sin \phi_t^2)  \partial (B_{12}+B_{21}) \notag \\
&= \frac{1}{2}\sin 2\phi_t  \partial\left( B_{22} - B_{11}\right) + \frac{1}{2} \cos2\phi_t \partial (B_{12}+B_{21}). \label{eq:cos2phi}
\end{align}
Variants of the bi-harmonic model \eqref{eq:cos2phi} have already been discussed in the context of synchronization, see e.g.~\cite{baxendale1991statistical} or \cite{FGS1}. In particular, as follows from the calculation in \cite{baxendale2006asymptotic}, the maximal Lyapunov exponent of the system is negative and thus the random attractor is discrete by Proposition \ref{prop:discrete}. For the convenience of the reader we provide a short proof of this result below.
\begin{proposition}
    Let $\phi_t$ be the solution process of  Eq.~\eqref{eq:cos2phi} on $\bbR / 2 \pi \bbZ \cong \bbS^1$. Then the 
    \begin{itemize}
        \item the maximal Lyapunov exponent of the induced RDS is $\Lambda = -1$,
        \item the sample measures of the associated invariant Markov measure are supported on two polar points 
        \[
        \mu_\omega = \frac{1}{2}\delta_{a(\omega)} + \frac{1}{2}\delta_{a(\omega) + \pi},
        \]
        where $a$ is an $\mathcal F_{- \infty}^0$-measurable random variable.
    \end{itemize}
\end{proposition}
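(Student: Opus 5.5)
The plan is to treat \eqref{eq:cos2phi} as a Stratonovich SDE on $\bbS^1$ driven by two independent Brownian motions. Set $W^1 := (B_{22}-B_{11})/\sqrt2$ and $W^2 := (B_{12}+B_{21})/\sqrt2$. Then
\[
\rmd\phi_t = \tfrac{1}{\sqrt2}\sin 2\phi_t\,\partial W^1_t + \tfrac{1}{\sqrt2}\cos 2\phi_t\,\partial W^2_t .
\]
The vector fields $F_1 = \tfrac{1}{\sqrt2}\sin 2\phi$ and $F_2 = \tfrac1{\sqrt2}\cos2\phi$ are smooth, so Proposition~\ref{prop:SDE} gives a smooth RDS.

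\textbf{Stationary measure.} First compute the Itô form. The correction is $\tfrac12\sum_j F_j F_j' = \tfrac12\cdot\tfrac12\cdot 2(\sin2\phi\cos2\phi - \cos2\phi\sin2\phi) = 0$, and the total diffusion coefficient is $F_1^2+F_2^2 = \tfrac12$. Hence $\phi_t$ is a Brownian motion on the circle with generator $\tfrac14\partial_\phi^2$, consistent with Theorem~\ref{th:rqf-one} for $n=2$. Its unique stationary measure is the uniform measure, so the Lyapunov exponent is well defined by \eqref{eq:asymptoticLE2}.

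\textbf{Lyapunov exponent.} The linearization \eqref{eq:var_process} is scalar:
\[
\rmd v_t = \sqrt2\,\bigl(\cos2\phi_t\,\partial W^1_t - \sin 2\phi_t\,\partial W^2_t\bigr)\, v_t .
\]
Since Stratonovich calculus obeys the ordinary chain rule, $\log|v_t| = \sqrt2\int_0^t (\cos2\phi_s\,\partial W^1_s - \sin2\phi_s\,\partial W^2_s)$. I would convert this to Itô form. The correction for the $W^1$ term is $\tfrac12\,\rmd[\sqrt2\cos2\phi, W^1] = \tfrac12\cdot\sqrt2\cdot(-2\sin2\phi)\cdot\tfrac1{\sqrt2}\sin2\phi\,\rmd t = -\sin^2 2\phi\,\rmd t$. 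Similarly, the $W^2$ term contributes $-\cos^2 2\phi\,\rmd t$. The total drift is therefore exactly $-1$, giving
\[
\log|v_t| = -t + M_t ,
\]
where $M_t$ is a martingale with quadratic variation $2t$. The strong law for martingales gives $M_t/t\to0$ almost surely, so $\Lambda=-1$. The main care needed here is bookkeeping of the Stratonovich correction with the normalization of $W^1,W^2$. As a cross-check, the exponent is constant and negative, independent of $\phi$.

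\textbf{Sample measures.} Since $\Lambda<0$, Proposition~\ref{prop:discrete} shows that $\mu_\omega$ is uniform on finitely many $\calF_{-\infty}^0$-measurable points, in the form \eqref{eq:discrete-mu}. To pin the support down to two antipodal points, note that the vector fields are $\pi$-periodic, so $\varphi(t,\omega,\phi+\pi)=\varphi(t,\omega,\phi)+\pi$. I would then invoke Theorem~\ref{th:rqf-attractor} with $n=2$: the cosine of the angle difference between two points tends to $\pm1$, so any two points of the support are equal or antipodal. Hence $N\le2$, and the $\pi$-equivariance together with uniqueness of the Markov measure forces $\mu_\omega$ to be invariant under $\phi\mapsto\phi+\pi$, so $N=2$.

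For a self-contained route, I would instead analyze $\psi_t=\phi_t-\tilde\phi_t$ for two points. Its drift and diffusion depend only on $\psi$, and its diffusion vanishes exactly at $\psi\in\{0,\pi\}$. A scale-function argument like Step 2 of the earlier proof would then give $\psi_t\to\{0,\pi\}$. The main obstacle is this exclusion of $N>2$, which I expect to handle most cleanly by citing Theorem~\ref{th:rqf-attractor}.
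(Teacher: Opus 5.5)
Your proof is correct, but it takes a different route from the paper in both bullets.

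\textbf{Lyapunov exponent.} The paper substitutes $\psi=2\phi$, rewrites the equation as $\rmd\psi=\sqrt2\sin\psi\,\partial W_1+\sqrt2\cos\psi\,\partial W_2$, and cites Baxendale's Example~2. Your direct Stratonovich-to-It\^o computation $\log|v_t|=-t+M_t$ with $\langle M\rangle_t=2t$ is self-contained, and the bookkeeping is right.

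\textbf{Sample measures.} The paper gets discreteness from $\Lambda<0$ and Proposition~\ref{prop:discrete}. It then rules out $N>2$ by passing to the quotient $\psi \bmod 2\pi$ (antipodal points identified), whose sample measure is a single Dirac mass by Baxendale; $N>2$ points upstairs would leave at least two downstairs. You instead get $N\le2$ from the pairwise synchronization behind Theorem~\ref{th:rqf-attractor}. You get $N=2$ from $\pi$-equivariance: with $R\phi=\phi+\pi$, $R_*\mu_\omega$ is again an $\calF_{-\infty}^0$-measurable invariant random measure with mean $R_*\bar\rho=\bar\rho$, hence equals $\mu_\omega$. That step is more rigorous than the paper's informal symmetry remark.

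What the paper's route buys is independence. Its bound $N\le2$ comes from the Lyapunov exponent and the quotient system, so it is a genuinely alternative proof of Theorem~\ref{th:rqf-attractor} on $\bbS^1$, which is the stated purpose of that subsection. In your route the second bullet rests on the Section~4 scalar-product argument. The negativity of $\Lambda$ is then not really needed for it, since Proposition~\ref{prop:discrete_cont} together with $\supp\mu_\omega\subseteq\{a,a+\pi\}$ already gives the atomic form.

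\textbf{Two details to tighten.}

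First, your generator $\tfrac14\partial_\phi^2$ is not $\tfrac12\Delta$, so the claimed consistency with Theorem~\ref{th:rqf-one} fails by a factor $2$. Eq.~\eqref{eq:cos2phi} is the RQF driven by $-\tfrac1{\sqrt2}Q_t$, i.e.\ in law the RQF of \eqref{eq:A} run at half speed. This is harmless for synchronization, but it must be said before invoking Theorem~\ref{th:rqf-attractor}. Your self-contained alternative avoids the issue and needs no scale function. The difference $\psi_t=\phi_t-\tilde\phi_t$ has zero It\^o drift and quadratic variation rate $2\sin^2\psi_t$. Started in $(0,\pi)$, it is a bounded martingale and converges a.s. Since $\int_0^\infty\sin^2\psi_s\,\rmd s<\infty$ a.s., the limit must lie in $\{0,\pi\}$.

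Second, write out the step from convergence of fixed pairs of initial points to ``any two points of $\supp\mu_\omega$ are equal or antipodal''. For example, $\bbE\iint\sin^2(x-y)\,\mu_\omega(\rmd x)\,\mu_\omega(\rmd y)=\lim_{t\to\infty}\iint\bbE\sin^2\bigl(\varphi(t,\omega,x)-\varphi(t,\omega,y)\bigr)\,\bar\rho(\rmd x)\,\bar\rho(\rmd y)=0$. This follows from the pull-back limit of Theorem~\ref{theo:Correspondence}, $\theta_t$-invariance of $\bbP$, and dominated convergence.
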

\begin{proof}
    \emph{Step 1: calculating $\Lambda$.} Let $\psi_t = 2\phi_t$. Then $\psi_t$ solves the equation
    \[
    \rmd \psi_t = \sin \psi_t  \partial\left( B_{22} - B_{11}\right) + \cos\psi_t \partial (B_{12}+B_{21}),
    \]
    defined on the interval $[0, 4\pi)$. Note that both driving processes $(B_{22} - B_{11})$ and $(B_{12}+B_{21})$ are rescaled independent Wiener processes and thus we can rewrite the system as
    \begin{equation}
    \label{eq:lyap-bax}
    \rmd \psi_t =\sqrt{2}\sin \psi_t  \partial W_1 + \sqrt{2} \cos\psi_t \partial W_2,
    \end{equation}
    with independent Brownian motions $W_1, W_2$. It follows from \cite[Example 2]{baxendale2006asymptotic} that the Lyapunov exponent of \eqref{eq:lyap-bax} is $\Lambda_\psi = -1$. Changing the variables back to $\phi$, we thus conclude that the Lyapunov exponent of the bi-harmonic model \eqref{eq:cos2phi} is $\Lambda = \Lambda_\psi = -1$.

    \emph{Step 2: employing the symmetry.} From Proposition \ref{prop:discrete}, we can conclude that $\mu_\omega$ is discrete. Moreover, using the symmetry argument as in the proof of Theorem \ref{th:rqf-attractor}, we deduce that the random attractor consists of at least two points. We will now show by contradiction that the random attractor almost surely consists of exactly two points. 
    
    Note that the factorization of $\psi_t$, namely the dynamical system 
    \[
    \tilde \psi_t = \psi_t\mod 2\pi,
    \]
    is identical to the dynamical system 
    \[
    \rmd \hat \psi_t =\sqrt{2}\sin \hat \psi_t  \partial W_1 + \sqrt{2} \cos\hat \psi_t \partial W_2,
    \]
    defined on $[0, 2\pi)$. From \cite[Example 2]{baxendale2006asymptotic}, it follows that the invariant sample measure $\hat \mu_\omega$ associated with the stochastic flow of $\hat \psi_t$ is almost surely concentrated on a single point $\hat a(\omega)$. 
    Let now $A_{\psi}(\omega) = \supp(\tilde \mu_\omega)$ be the random point attractor of the RDS $(\theta, \varphi_{\psi})$ induced by the SDE \eqref{eq:lyap-bax} on $[0, 4 \pi)$. 
Assume that the random attractor $A_{\psi}(\omega)$ %
consists of $N> 2$ distinct points, then the corresponding random attractor of the factorized RDS $A_{\hat \psi}(\omega) = \supp(\hat \mu_\omega)$ consists of $\hat N\geq \lfloor \frac{N+1}{2}\rfloor$ points, which leads to a contradiction. We, therefore, conclude that $A_\psi(\omega) = \{\tilde a_1(\omega), \tilde a_2(\omega)\}$, where
    \[
    \tilde a_2(\omega) = \tilde a_1(\omega) + 2\pi, \quad \tilde a_1(\omega) \in [0, 2\pi).
    \]
    Finally, by definition of the random point attractor for the RDS with cocycle $\varphi_{\psi}$, we obtain that, for all $x\in [0, 4\pi)$,
\[
\dist\big(\varphi_{\psi}(t, \theta_{-t}\omega, x), a(\omega)\big) \to 0, \quad \text{ for some } \ a(\omega) \in A_\psi(\omega)=\{\tilde a_1(\omega), \tilde a_2(\omega)\},
\]
at least in probability.
Since $\psi_t = 2 \phi_t$, this implies that, in probability,
\[
\dist\left(\varphi_{\phi}(t, \theta_{-t}\omega, \frac{x}{2}), \frac{a(\omega)}{2}\right) = \frac12\dist\big(\varphi_{\psi}(t, \theta_{-t}\omega, x), a(\omega)\big)\to 0.
\]
Hence, we have that
$$A_{\phi}(\omega) = \left\{\frac{\tilde a_1(\omega)}{2}, \frac{\tilde a_2(\omega)}{2} \right\}$$
is the minimal weak random point attractor of the RDS induced by \eqref{eq:cos2phi}, yielding the result, again by Proposition \ref{prop:discrete}.
\end{proof}
\subsection{Including the bias term}
\label{sec:bias}
So far, we have considered the dynamics of the purely quadratic random functional. At the same time, the motivating example of linear layers in transformers also includes a bias term. Including such a (random) bias results in the model
\begin{equation}
\label{eq:bias}
\rmd X_t = -\nabla \left(\frac12 X_t^T \partial Q_t X_t + X^T_t \partial W_t\right),
\end{equation}
where $W_t$ is an $n$-dimensional Wiener process, independent from the process $Q_t$. Extending the approach from the introduction, we observe that the bias term is a formal gradient flow of a linear functional
\[
\nabla ( X^T_t \partial W_t) = \nabla F^{lin}_{\partial W_t} = \nabla \left<X, \partial W_t\right>.
\]
The deterministic counterpart of $F^{lin}$ is $F_v = \left<x, v\right>$, for some $v\in\bbR^n$; hence, for arbitrary $v\neq 0$, the functional $F_v$ has a unique minimizer on the unit sphere $x^* = -v/\|v\|$.

Reducing the model to the pure random bias results in the classical formulation of the Brownian motion on the sphere, namely the dynamics of the form
\begin{equation}
\label{eq:pure_bias}
    \rmd X_t = -\nabla (X^T_t \partial W_t) = -P_{X_t}\partial W_t.
\end{equation}
The two-point motion of this Brownian motion on the sphere has been studied by P. Baxendale in \cite{baxendale1986asymptotic}, where it was shown that the random attractor of the induced RDS is, in fact, a singleton. Thus, the Brownian motion model~\eqref{eq:pure_bias} provides another example where the structure of the attractor for a random gradient flow and the minimizer of the deterministic counterpart are intimately related.

Since the RQF is a Wiener process, the combined model \eqref{eq:bias} is, in law, also a (rescaled) Brownian motion. At the same time, the nature of the attractor of \eqref{eq:bias} is not obvious. 
In this work, we have seen that the quadratic energy functional leads to a bi-polar configuration while the linear one results in particles forming a single cluster; hence, we anticipate the existence of a phase transition, depending on the relative contribution of the two components. We also remark that a similar effect has been described in  \cite{alvarez2026perceptrons}, where, however, the single cluster regime was enforced by a deterministic drift.

\subsection{More general architectures}
In this work, we have restricted the transformer model to a linear activation function in the Feed-Forward layers, discarding the effects coming from the non-linearity. Hence, another future extension may entail the explicit characterization of the effect of the activation function on the dynamics of the corresponding random flow. Note that, for the deterministic model, such an effect was recently considered in \cite{alvarez2026perceptrons}. 

Taking the formal quadratic expansion of the dynamics with non-linearity, we obtain
\[
\rmd X_t = \sigma (\partial A_t X_t) \approx \sigma(0) +\sigma'(0)\partial A_t X_t + \frac{1}{2}\sigma''(0)[\partial A_t X_t, \partial A_t X_t] + o([\partial A_t, \partial A_t]),
\]
i.e., the correction appearing due to a non-trivial activation function has a deterministic term of leading order. Thus, we expect that the methods of this work together with the ones from \cite{fedorov2026clustering} may be useful to study the long-term behavior of the corresponding dynamical system. Multiple other generalizations will arise from combining feed-forward and self-attention dynamics in the common noise framework and from varying the dynamics of the driving noise. 
\bibliographystyle{alpha}
\bibliography{biblio}
\end{document}